\numberwithin{equation}{section} \hyphenation{semi-stable}
\newcommand {\PP}{\mathbb{P}}
 \DeclareMathOperator{\Proj}{Proj}
 \def\cocoa{{\hbox{\rm C\kern-.13em
      o\kern-.07em C\kern-.13em o\kern-.15em A}}}
\newtheorem{theorem}{Theorem}[section]
\newtheorem{lemma}[theorem]{Lemma}
\newtheorem{proposition}[theorem]{Proposition}
\newtheorem{problem}[theorem]{Problem}
 \theoremstyle{definition}
\newtheorem{definition}[theorem]{Definition} \theoremstyle{remark}
\newtheorem{remark}[theorem]{Remark}
\newtheorem{example}[theorem]{Example}
\newtheorem{notation}[theorem]{Notation}
\DeclareMathOperator{\Ann}{Ann}
\DeclareMathOperator{\Hom}{Hom}
\DeclareMathOperator{\Tor}{Tor}
\DeclareMathOperator{\Ext}{Ext}
\newcommand{\cA}{{\mathcal A}}
\definecolor{MyDarkGreen}{cmyk}{0.7,0,1,0}
\begin{document}

\title{Betti numbers of full Perazzo algebras}

  \author[R.\ M.\ Mir\'o-Roig]{Rosa M.\ Mir\'o-Roig}
  \address{Facultat de
  Matem\`atiques i Inform\`atica, Universitat de Barcelona, Gran Via de les
  Corts Catalanes 585, 08007 Barcelona, Spain} \email{miro@ub.edu,  ORCID 0000-0003-1375-6547}

  \author[Josep Pérez]{Josep Pérez}
 \address{Facultat de
  Matem\`atiques i Inform\`atica, Universitat de Barcelona, Gran Via de les
  Corts Catalanes 585, 08007 Barcelona, Spain} \email{jperezdiez@ub.edu}

\thanks{\textit{Mathematics Subject Classification. 13E10, 13D40, 14M05, 13H10.}}
\thanks{\textit{Keywords}. Perazzo hypersurfaces,minimal free resolution, artinian Gorenstein algebras, Hilbert function.}

\begin{abstract}
In this  paper we prove that any full Perazzo algebra $A_F$, whose Macaulay dual generator is a Perazzo form $F\in K[X_0,\dots,X_n,U_1,\dots,U_m]_d$ with $n+1 = \binom{d+m-2}{m-1}$, is the doubling of a 0-dimensional scheme in $\PP^{n+m}$ and we compute the graded Betti numbers of a minimal free resolution of $A_F$.
\end{abstract}

\maketitle

\section{Introduction}
Artinian graded Gorenstein algebras  (also known as Poincar\'e duality algebras) have long been studied due to their ubiquity in many fields of mathematics, including algebraic geometry, commutative algebra, algebraic topology, combinatorics, etc. Over the last several decades, a general aspect of such algebras that
has been extensively studied is the existence of a structure theorem for their minimal free resolutions. For the codimension 2 case the result is well known since artinian Gorenstein algebras and complete intersection algebras coincide. For codimension 3 artinian Gorenstein algebras  there is the Buchsbaum and Eisenbud 
structure theorem \cite{BE}; for codimension $>3$
the matter becomes much more intriguing and no general structure theorem is known apart from particular cases like  codimension 4 artinian Gorenstein algebras defined by the submaximal minors of a $t\times t$ homogeneous matrix \cite{Gu}. In this paper, we will concentrate our attention on the problem of determining a minimal free resolution and, in particular, the graded Betti  numbers of certain artinian Gorenstein algebras which are the so-called full Perazzo algebras.

\vskip 2mm
Among one of the most important kinds of artinian Gorenstein algebras that have been recently deeply studied from the viewpoint of
Lefschetz properties, we have the Perazzo algebras. These algebras are associated via Macaulay-Matlis duality to 
a Perazzo hypersurface of degree $d$. By definition, a Perazzo hypersurface $X\subset \PP^{n+m}$ is a hypersurface defined by a degree $d$ form  $F\in K[X_0, \ldots,X_n,U_1,\ldots,U_m]$ of the following type  (see \cite{G}):
\[
F=X_0p_0+X_1p_1+\dots+X_np_n+G,
\]
with $n\geq m\geq 2$, $p_0, \ldots, p_n\in K[U_1, \ldots, U_m]_{d-1}$,  $G\in K[U_1, \ldots, U_m]_d$, where $p_0, \ldots, p_n$ are algebraically dependent but linearly independent.

\vskip 2mm
The interest in Perazzo hypersurfaces comes from the fact that they are a class of examples of hypersurfaces with vanishing hessian which are not cones. The characterization of such hypersurfaces is an important open problem in projective algebraic geometry since the classical works of Hesse and of Gordan-Noether (\cite{He1}, \cite{He2}, \cite{GN}). Hesse believed that the determinant of the hessian of a homogeneous polynomial  vanishes if and only if its variety of zeros  is
a cone, as is indeed the case when the degree of the form is $2$. However, P. Gordan and M. Noether 
proved that while Hesse’s claim is true for forms in at most $4$ variables, it is false for $5$ or more variables and any
degree $\geq 3$. Since then, many efforts have been made to find a characterization for arbitrary degree and number of variables. The first counterexamples known are precisely Perazzo hypersurfaces (see \cite[ Appendix A]{G}, \cite[Chapter 7]{R}), but there are other kinds of examples too (see \cite{G2}).

In \cite[Problem 2]{MM}, Mezzetti and the first author   posed the problem of describing the minimal free resolution of {\em any} 
Perazzo algebra. In general, this  turns out to be a
very difficult problem and so far few cases are known (see, for instance, \cite{JMR}, \cite{MM} and \cite{MR}). In this paper we determine the minimal free resolution of a large class of Perazzo algebras, namely, we determine the graded Betti numbers of {\em any} full Perazzo algebra (see Definition \ref{PerazzoDefinition}).
It is well known that the  minimal graded free $R$-resolution of $A_F$ has the following shape:
$$
0\longrightarrow \oplus _jR(-j)^{\beta_{n+1,j}^R(A_F)}
\longrightarrow \oplus _jR(-j)^{\beta_{n,j}^R(A_F)}\longrightarrow \cdots  \longrightarrow \oplus _jR(-j)^{\beta_{i,j}^R(A_F)} \longrightarrow
$$
$$\cdots \longrightarrow \oplus _jR(-j)^{\beta_{1,j}^R(A_F)}\longrightarrow  R\longrightarrow A_F\longrightarrow 0 
$$
and the graded Betti numbers $\beta_{i,j}^R(A_F)$ of $A_F$ over $R$ are defined as usual as  the integers 
$$\beta_{i,j}^R(A_F)=\dim_K [\Tor^R_i(A_F,K)]_j.$$ These homological invariants are our main focus. It is important to point out that the graded Betti numbers of an Artinian graded algebra $A_F$ determine its  graded Poincar\'{e} series. In fact, the graded Poincar\'{e} series of $A_F$ over $R$ is the generating function $$P^R_{A_F}(t,s)=\sum_{i,j} \beta_{i,j}^R(A_F) t^is^j.$$

\vskip 2mm
As a main tool we will use the doubling construction, which, together with the mapping cone process, yields a minimal free resolution of any full Perazzo algebra, by knowing a minimal free resolution of a certain 0-dimensional scheme. Having a structure theorem for the minimal free resolution of a full Perazzo algebra $A_F$ is interesting by itself, but also  because the Lefschetz properties of $A_F$  can be determined from its graded Betti numbers, by narrowing the possible Jordan types of the multiplication map $\times \ell :A_F\rightarrow A_F$, where $\ell$ represents a linear form.

We now give a more precise description of the main contents and the structure of this paper. In section \ref{prelim}, we recall the doubling construction, as well as the basic notions of Perazzo algebras and minimal free resolutions. In section \ref{toy}, we explain with details a toy example with the idea that it will help the reader to understand the strategy we use in order to prove the main results of this paper. Section \ref{MFRPerazzo} is the core of this paper and  contains the main result, i.e., the formula for the graded Betti numbers of a full Perazzo algebra. Finally, we state some open problems.

\section{Notation and background material}\label{prelim}

In this section we fix notation and we recall the basic facts on minimal free resolutions, {\em doublings} as well as on Perazzo algebras needed later on.

Throughout this paper  $K$ will be an algebraically closed field of characteristic zero.
Given a standard graded artinian $K$-algebra $A=R/I$ where $R=K[x_0,\dots,x_N]$ and $I$ is a homogeneous ideal of $R$,
we denote by $HF_A\colon \mathbb{Z} \longrightarrow \mathbb{Z}$ with $HF_A(j)=\dim_K[A]_j=\dim _K[R/I]_j$
its Hilbert function. Since $A$ is artinian, its Hilbert function is
captured in its $h$-\emph{vector} $h=(h_0,\dots ,h_d)$ where $h_i=HF_A(i)>0$ and $d$ is the largest index with this property. The integer $d$ is called the \emph{socle degree of} $A$ or the regularity of $A$ and denoted by $reg(A)$.


\subsection{Artinian Gorenstein algebras and Macaulay-Matlis duality}

In this subsection  we recall the definition of artinian Gorenstein algebra and we quickly recall the construction of the standard graded artinian Gorenstein algebra $A_F$ with {\em Macaulay dual generator} a given form $F\in S=K[X_0,\dots,X_N]$.

\begin{definition}
     A standard graded artinian algebra
$A = R/I$ is {\it Gorenstein} if its h-vector is symmetric and its socle is one-dimensional, occuring in just the last degree. We sometimes refer to the number of variables of $R$ as the {\it codimension} of $A$.
\end{definition}

Let $R=K[x_0,\ldots,x_N]$ be a polynomial ring and let $S=K[X_0,\ldots,X_N]$ be a divided power algebra, regarded as a $R$-module with the contraction action \cite[Example A.5, Appendix A]{IK}
\[
x_i\circ X_0^{j_0}..X_i^{j_i}..X_N^{j_N}=\begin{cases}X_0^{j_0}..X_i^{j_i-1}..X_N^{j_N} & \text{if} \ j_i>0\\ 0 & \text{otherwise}.\\ \end{cases}
\]

For each degree $i\geq 0$, the action of $R$ on $S$ defines a non-degenerate $K$-bilinear pairing 
\begin{equation}
\label{eq:MDPairing}
    R_i \times S_i \longrightarrow K \text{ with } (f,F) \longmapsto f \circ F.
\end{equation}
This implies that for each $i\geq 0$ we have an isomorphism of $K$-vector spaces $S_i\cong \Hom_K(R_i,K)$ given by $F\mapsto\left\{f\mapsto f\circ F\right\}$.

It is a classical result of Macaulay \cite{Mac} (cf. \cite[Theorem 2.1]{IK}) that an artinian $K$-algebra $A=R/I$ is Gorenstein with socle degree $d$ if and only if  $I=\Ann_R(F)=\{f\in R\mid f\circ F=0\}$ for some homogeneous polynomial $F\in S_d$.  Moreover, this polynomial, called a {\em Macaulay dual generator} for $A$, is unique up to a scalar multiple. 


\subsection{Perazzo algebras}

The simplest known counterexample to Hesse's claim, i.\,e.\ a homogeneous polynomial with vanishing  Hessian which does not define a cone, is $XU^2+YUV+ZV^2$. This example was extended to a class of cubic counterexamples in all dimensions  by Perazzo in \cite{P}. The results of Gordan-Noether and of Perazzo have been recently considered and rewritten in modern language by many authors  \cite{CRS}, \cite{L}, \cite{GR}, \cite{Wa} and \cite{WB}. Following these papers we define:

\begin{definition}\label{perazzohypersur} Fix integers $n, m\ge 2$. A {\em Perazzo} hypersurface  $X\subset \PP^{n+m}$ of degree $d\ge 3$ is a hypersurface defined by a form $F\in K[X_0,\dots ,X_n,U_1\dots ,U_m]_d$ of the following type:
$$ F=X_0p_0+X_1p_1+\cdots +X_np_n+g
$$
where  $p_i\in K[U_1,\dots ,U_m]_{d-1}$ are algebraically dependent but linearly independent and $g\in K[U_1,\dots ,U_m]_{d}$. By abuse of terminology, we call $F$ a {\em Perazzo} form.
\end{definition}

The fact that the $p_i$'s are algebraically dependent implies the vanishing of the hessian of $F$ and  $n+1 \le \binom{d+m-2}{m-1}$ (see \cite[Propositions 4.1 and 4.2]{WB}), while the linear independence assures that $V(F)$ is not a cone.

\begin{definition}\label{PerazzoDefinition}
    A {\em Perazzo} algebra $A_F$ is a standard graded artinian Gorenstein algebra whose Macaulay dual generator is a Perazzo form $F$ of degree $d\ge 3$. A Perazzo algebra $A_F$ is said to be {\em full} if  $F=X_0p_0+\cdots + X_np_n  \in K[X_0,\dots,X_n,U_1,\dots,U_m]_d$ and $\{p_0,\dots,p_n\}$ is a basis of $K[U_1,\dots,U_m]_{d-1}$. So, for full Perazzo algebras we necessarily have $n+1=\binom{d+m-2}{m-1}$.
\end{definition}

Lefschetz properties have been the subject of intense research in recent years, many natural problems are still open and the
general picture is far from being understood. The study of hessians of homogeneous polynomials has gained
new attention because of its connection to Lefschetz properties for
graded artinian Gorenstein algebras. The question if these algebras satisfy or fail the weak Lefschetz property has been
considered in some recent articles (\cite{A}, \cite{FMMR}, \cite{MM} and \cite{MP}), where the case of
Perazzo forms with $m = 2$ has been completely solved.

\subsection{Minimal free resolutions and graded Betti numbers}
 Let $M$ be a finitely generated  $R$-module. It is well known that it has a minimal graded free $R$-resolution of the following type:
$$
0\longrightarrow F_{N+1} 
\longrightarrow F_N \longrightarrow \cdots  \longrightarrow F_i \longrightarrow \cdots \longrightarrow F_1\longrightarrow  F_0\longrightarrow M\longrightarrow 0 
$$
where
$$ F_i=\oplus _jR(-j)^{\beta_{ij}^R(M)}
$$
and the graded Betti numbers $\beta_{ij}^R(M)$ of $M$ over $R$ are defined as usual as  the integers
$$
\beta_{i,j}:=\beta_{ij}^R(M)=\dim_K [\Tor^R_i(M,K)]_j.
$$ 

These homological invariants are our main focus and indeed our goal in Section \ref{MFRPerazzo} is to determine the graded Betti numbers $\beta _{ij}^R(A_F)$ of {\em any} full Perazzo algebra $A_F$ (see Theorem \ref{mainTheoremGeneralFullPerazzos}).

 A compact way to display the graded Betti numbers $\beta _{i,j}$  is the {\em Betti} table, that is,

    \vskip 4mm
    
    \begin{center}
    \begin{tabular}{ r | c c c c}
           &  0 &  1 &  $\cdots$ & N+1 \\
    \hline
     $\vdots$ &  $\vdots$ &  $\vdots$ & $\ddots$ & $\vdots$ \\
        k &  $\beta_{0,k}$ & $\beta_{1,k+1}$ & $\cdots$ &  $\beta_{N,k+N+1}$ \\  
        k+1 &  $\beta_{0,k+1}$ & $\beta_{1,k+2}$ & $\cdots$ & $\beta_{N,k+1+N+1}$ \\
 $\vdots$ &  $\vdots$ &  $\vdots$ & $\ddots$ & $\vdots$ \\
    \end{tabular}
    \end{center}
Note that in the $k$-th row of the $i$-th column there is $\beta_{i,k+i}$, instead of $\beta_{i,k}$, for a better visualisation of $reg(M)$, which can be computed using the graded Betti numbers as

$$reg(M) := max_{i,j}\{j - i | \beta_{i,j}(M) \ne 0\}.$$

Moreover, if $I$ is an artinian ideal, then $reg(R/I) = max_i\{i \ | \ \mathfrak{m}^{i+1} \subset I\}=socle(R/I)$ where $\mathfrak{m}$ denotes the irrelevant ideal.

\begin{example}\label{exampleMinimalFreeResolution}
    Consider the complete intersection $I=(x-y,y^2,z^3)\subset R:=K[x,y,z]$. The minimal free resolution of $I$ is given by the Koszul complex:
    $$
    0 \rightarrow R(-6) \xrightarrow{\begin{pmatrix}z^3 \\ -y^2 \\ x-y \end{pmatrix}} \begin{array}{c} R(-3) \\ \oplus \\ R(-4) \\ \oplus \\ R(-5) \end{array} \xrightarrow{\begin{pmatrix} -y^2 & -z^3 & 0 \\ x-y & 0 & -z^3 \\ 0 & x-y & y^2 \end{pmatrix}} \begin{array}{c} R(-1) \\ \oplus \\ R(-2) \\ \oplus \\ R(-3) \end{array} \xrightarrow{\begin{pmatrix} x-y & y^2 & z^3 \end{pmatrix}} I \rightarrow 0
    $$
\end{example}
Therefore, $reg(R/I)=3$, the $h$-vector of $R/I$ is $(1,2,2,1)$ and the Betti table is:

    \begin{center}
    \begin{tabular}{ r | c c c c}
           &  0 &  1 &  2 & 3 \\
    \hline
        0 & 1 & 1 & · & · \\  
        1 & · & 1 & 1 & · \\
        2 & · & 1 & 1 & · \\
        3 & · & · & 1 & 1 
    \end{tabular}
    \end{center}


\subsection{Doubling construction}
 Let us recall the doubling construction and some basic results related to it.

\begin{definition}
 The \emph{canonical module} of a graded $R$-module $M$ is defined:
 $$\omega_M := \Ext^{N+1 - \dim M}_R (M, R).$$ 
\end{definition}

\begin{definition}\label{doubling} \cite[Section 2.5]{KKRSSY} Let $J\subset R$ be a homogeneous ideal of codimension $c$, such that $R/J$ is Cohen-Macaulay and $\omega_{R/J}$ is its canonical module. Furthermore, assume that $R/J$ satisfies the condition $G_0$ (i.e., it is Gorenstein at all minimal
primes). 
Let $I$ be an ideal of codimension $c + 1$. $I$ is called a {\em doubling } of $J$ via $\psi$ if there exists  a short exact sequence of $R/J$ modules
\begin{equation}\label{eq:doubling}
0 \rightarrow \omega_{R/J}(-d)\stackrel{\psi}{\rightarrow} R/J \rightarrow R/I\rightarrow 0.
\end{equation}
We will also say, by abuse of terminology, that $R/I$ is the doubling of $\Proj(R/J)$.
\end{definition}

By \cite[Proposition 3.3.18]{BH}, if $I$ is a doubling then $R/I$ is a Gorenstein ring. Doubling plays an important role in the theory of Gorenstein liaison. Indeed, in \cite{KMMNP}, doubling is used to produce suitable Gorenstein divisors on arithmetically Cohen-Macaulay subschemes in several foundational constructions.
It is not true that every
artinian Gorenstein ideal of codimension $c + 1$ is a doubling of some codimension $c$ ideal (see, for instance, 
\cite[Example 2.19]{KKRSSY}).

Moreover, the mapping cone of $\psi$ in \eqref{eq:doubling} gives a resolution of $R/I $. If it is minimal, then
one can read off the Betti table of $R/I$ from the Betti table of $R/J$. This mapping cone is the direct sum of the minimal free resolution $F_{\bullet }$ of $R/J$ with its dual (reversed) complex $\Hom(F_{\bullet},R)$, which justifies the terminology of {\em doubling}.

\begin{example}
    Consider the ideal $J=(y^2,z^3)\subset R:=K[x,y,z]$. The minimal free resolution of $R/J$ is:
    $$
    0 \rightarrow R(-5) \xrightarrow{\begin{pmatrix}-z^3 \\ y^2 \end{pmatrix}} \begin{array}{c} R(-2) \\ \oplus \\ R(-3) \end{array} \xrightarrow{\begin{pmatrix} y^2 & z^3 \end{pmatrix}}R \rightarrow R/J \rightarrow 0
    $$
    Dualizing and twisting by $-6$ we get the minimal free resolution of $\omega_{R/J}(-3)$. That is:
    $$
    0 \rightarrow R(-6) \rightarrow \begin{array}{c} R(-3) \\ \oplus \\ R(-4) \end{array} \rightarrow R(-1) \rightarrow \omega_{R/J}(-3) \rightarrow 0
    $$
    Then, if we send the only generator of $\omega_{R/J}(-3)$, which has degree $1$, to the generator $x-y$ of the ideal $I$ in Example \ref{exampleMinimalFreeResolution}, we obtain the following short exact sequence:
    $$
    0 \rightarrow \omega_{R/J}(-3) \xrightarrow{\psi} R/J \rightarrow R/I \rightarrow 0
    $$
    Therefore, $I$ is the doubling of $J$. Notice that we can recover the graded Betti numbers of $R/I$ from the mapping cone of $\psi$.
\end{example}

\vskip 4mm

\section{Toy example}\label{toy}


The goal of this section is to ease the comprehension of the main results in this paper, by studying  a toy example to illustrate all of them. Indeed, we consider  a full Perazzo algebra $A_F$, with socle degree $d+1$, whose Macaulay dual generator $F$ lies in $R=K[X_0,\dots,X_d,U,V]$. We will prove that $A_F$ is the doubling of a suitable  0-dimensional scheme $Z_F\subset \PP^{d+2}=\Proj(R)$ of length $2(d+1)$ and, using the mapping cone process, we will be able to compute its graded Betti numbers.

\begin{lemma}\label{resolutionFromHVector}
    Let $A=R/I$ be an artinian algebra of codimension $c$, socle degree $e\ge 3$ and $h$-vector $(1,c,1,\cdots ,1)$. It holds:

    (1) Up to change of coordinates: $I=\langle x_{c-1}^{e+1}, x_ix_j \mid 0\le i,j\le c-1 \text{ and } (i,j)\ne (c-1,c-1) \rangle $
     
     (2) The Betti table of $A$ is:
    \begin{center}
    \begin{tabular}{ r | c c c c c}
           &  0 &  1 &  2 & $\cdots$ &  c \\
    \hline
        0 &  1 &  · &  · &  · &  · \\  
        1 &  · & $\beta_{1,2}$ & $\beta_{2,3}$ & $\cdots$ & $\beta_{c,c+1}$ \\
        . &  · &  · &  · &  · &  · \\
      e &  · &  $\beta_{1,e+1}$ &  $\beta_{2,e+2}$ &  $\cdots$ &  $\beta_{c,e+c}$ \\    
    \end{tabular}
    \end{center}
where 
$$
\begin{cases}
    \beta_{i,e+i}=\binom{c-1}{i-1} \text{ for } 1\le i \le c; \\\\
    \beta_{i,i+1}=i\binom{c+1}{i+1}-\binom{c-1}{i-1} \text{ for } 1\le i \le c.
\end{cases}
$$
\end{lemma}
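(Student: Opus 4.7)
My plan is to prove (1) by a structural analysis of multiplication in $A$, and then derive (2) from a short exact sequence together with the long exact sequence of $\Tor$.

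For (1), choose any basis of $A_1$ and lifts $x_0,\dots,x_{c-1}\in R$. For each $k$ with $2 \le k \le e-1$, the fact that $A_k$ and $A_{k+1}$ are both one-dimensional makes multiplication $\phi_k:A_1\to\Hom_K(A_k,A_{k+1})\cong K$ a nonzero linear form on $A_1$ with codim-$1$ kernel $H_k$. Fixing generators $q_k$ of $A_k$ and writing $q_{k+1}=\ell' q_k$ for some $\ell'\in A_1$ gives $H_k\subseteq H_{k+1}$, so by dimension count all these hyperplanes coincide, say $H:=H_2=\cdots=H_{e-1}$. A separate argument identifies the kernel of the symmetric bilinear form $\phi_1:A_1\otimes A_1\to A_2\cong K$ with $H$: the inclusion $\ker\phi_1\subseteq H$ is immediate from writing $q_2=\ell_1\ell_2$; for the reverse, given $\ell\in H$ and $\ell'\in A_1$, for every $\ell_3\in A_1$ one uses $A_2=Kq_2$ and associativity to get
\[
\ell\,\ell'\,\ell_3 \;=\; \ell\cdot(\ell'\ell_3) \;=\; \mu\cdot(\ell q_2) \;=\; 0 \quad\text{in } A_3,
\]
and since $q_2\cdot A_1=A_3\ne 0$, writing $\ell\ell'=\nu q_2$ forces $\nu=0$. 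Choosing coordinates so that $H=\langle x_0,\dots,x_{c-2}\rangle$ now yields $x_ix_j\in I$ for $(i,j)\ne(c-1,c-1)$, while the socle-degree hypothesis gives $x_{c-1}^{e+1}\in I$. Containment of the ideal $J$ from the statement inside $I$ is then clear, and a direct calculation shows $R/J$ has the $h$-vector $(1,c,1,\dots,1)$, forcing $I=J$.

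For (2), consider the short exact sequence
\[
0\longrightarrow K(-1)^{c-1}\longrightarrow R/I\longrightarrow R/I_1\longrightarrow 0,
\]
where $I_1:=(x_0,\dots,x_{c-2},x_{c-1}^{e+1})$ is the complete intersection containing $I$, and $I_1/I\subset R/I$ is the $\mathfrak{m}$-annihilated submodule generated by the classes of $x_0,\dots,x_{c-2}$. Since $I_1$ is a complete intersection of generator degrees $(1,\dots,1,e+1)$, its Koszul resolution yields $\beta_{i,i}(R/I_1)=\binom{c-1}{i}$ and $\beta_{i,i+e}(R/I_1)=\binom{c-1}{i-1}$, while the shifted Koszul complex of $K$ gives $\beta_{i,i+1}(K(-1)^{c-1})=(c-1)\binom{c}{i}$. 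The long exact sequence of $\Tor^R(-,K)$ then computes $\Tor^R_i(R/I,K)_j$ at each bidegree. The crucial homological input is the vanishing $\beta_{i,i}(R/I)=0$ for $i\ge 1$, which holds because $I$ is generated in degree $\ge 2$ (so $\beta_{i,j}(R/I)=0$ whenever $j\le i$ and $i\ge 1$). Using this to secure injectivity of the relevant connecting maps, and observing that bidegrees with $j-i\notin\{0,1,e\}$ contribute nothing from either extreme of the sequence, one reads off
\[
\beta_{i,i+e}(R/I)=\binom{c-1}{i-1}, \qquad \beta_{i,i+1}(R/I)=(c-1)\binom{c}{i}-\binom{c-1}{i+1}.
\]
A short binomial identity, equivalent after Pascal's rule to $c\binom{c}{k}-\binom{c}{k+1}=k\binom{c+1}{k+1}$, rewrites the row-$1$ formula in the equivalent form $i\binom{c+1}{i+1}-\binom{c-1}{i-1}$ stated in the lemma.

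The subtlest point, and in my view the main obstacle, is step (1)'s identification $\ker\phi_1=H$: a priori $q_2$ could be a higher-rank quadric, and this is ruled out only by combining one-dimensionality of $A_2$ with the surjectivity $q_2\cdot A_1=A_3$, which in turn relies on $e\ge 3$. Once (1) is in hand, step (2) is a routine tracking of a long exact sequence whose outer terms have explicit Koszul Betti numbers, and the binomial manipulation at the end is standard.
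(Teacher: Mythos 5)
Your proof is correct, but it takes a genuinely different route from the paper's in both parts. For (1), the paper simply invokes Macaulay--Matlis duality and asserts that the inverse system must be (after a coordinate change) the module generated by $X_{c-1}^e, X_0,\dots,X_{c-1}$; your direct analysis of the maps $\phi_k\colon A_1\to \Hom_K(A_k,A_{k+1})$, the coincidence of their kernels with a single hyperplane $H$, and the identification of $H$ with the radical of the pairing $A_1\times A_1\to A_2$ actually supplies the justification the paper leaves implicit (and correctly isolates where $e\ge 3$ is used). For (2), the paper compares $A$ with $B=R/J_2$, where $J_2$ is generated by the quadrics of $I$: it observes that the rows $0,\dots,e-1$ of the two Betti tables agree, computes the linear strand of the stable monomial ideal $J_2$ via the Eliahou--Kervaire formula, and recovers the last row from $\operatorname{reg}(A)=e$ and the $h$-vector. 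You instead use the short exact sequence $0\to K(-1)^{c-1}\to R/I\to R/I_1\to 0$ with $I_1=(x_0,\dots,x_{c-2},x_{c-1}^{e+1})$ a complete intersection, and extract both nonzero rows simultaneously from the long exact sequence of $\Tor$, with the key injectivity of the connecting maps coming from $I\subseteq\mathfrak m^2$ (so $\beta_{i,j}(R/I)=0$ for $j\le i$, $i\ge 1$); the concluding binomial identity reconciling $(c-1)\binom{c}{i}-\binom{c-1}{i+1}$ with $i\binom{c+1}{i+1}-\binom{c-1}{i-1}$ checks out. Your approach is more elementary and self-contained for this specific lemma; the paper's truncation-plus-Eliahou--Kervaire strategy is the one that scales to the general full Perazzo case later (Proposition 4.6), where the degree-$d$ part of the ideal is no longer a single power of a variable and your complete-intersection comparison would not be available.
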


\begin{proof}
    (1) Set $R:=K[x_0,\ldots,x_{c-1}]$ and let $S:=K[X_0,\ldots,X_{c-1}]$ be a divided power algebra, regarded as a $R$-module with the contraction action. Applying Macaulay-Matlis duality and taking into account that  $A$ is an artinian algebra of codimension $c$, socle degree $e\ge 3$ and $h$-vector $(1,c,1,\cdots ,1)$ we get (after changing coordinates, if necessary):
    $$I=\Ann _R(M)=\{f\in R \mid f\circ F=0 \text{ for all } F\in M\}$$
    where $M=\{X_{c-1}^e,X_0,\cdots ,X_{c-1}\}$ and the result follows.

    (2) Consider the algebra $B=R/J_2$ where $J_2$ is the ideal generated by the quadrics in $I$.  We have $HF_B(t)=HF_A(t)$ for $t\le e$ and $HF_B(t)=1$ for all $t\ge 2$. Therefore, the t-th rows, $0\le t \le e-1$, of the Betti tables of $A$ and $B$ coincide. But all those rows are zero in the Betti table of $B$ except for the 1-st row, whose Betti numbers are: 
    $$
    \beta_{i,i+1}=i\binom{c+1}{i+1}-\binom{c-1}{i-1} \text{ for } 1\le i \le c.
    $$
    Now, since $reg(A)=e$, the e-th row must be the last non-zero row and its corresponding Betti numbers are completely determined by the $h$-vector of $A$.    
\end{proof}

\begin{notation}\label{doublePointNotation}   
Set $R:=K[x_0,\dots,x_n,u,v]$ and $S:=K[X_0,\dots,X_n,U,V]$. Let $\ell_i=\lambda_i u+\mu_i v$, $0\le i \le n$,  be linear forms two by two linearly independent and define $q_i\in \Proj(R)=\PP^{n+2}$ to be the double point with homogeneous ideal $I_i=(x_0,\dots,x_i^2,\dots,x_n,\ell_i) $
\end{notation}

 \begin{proposition}\label{zeroSchemeResolutionViaArtinianReduction} With the above notation, let $n\ge 2$ and  let $Z=\{q_0,\cdots ,q_n\}\subset \PP^{n+2}$ be the 0-dimensional scheme defined by $I:=\cap _{i=0}^nI_i$. Then the following holds:
 \begin{itemize}
     \item[(1)] $deg(Z)=2(n+1)$,
     \item[(2)] The $h$-vector of $Z$ is $(1,n+2,1,...,1)$, and
     \item[(3)] The Betti table of $R/I(Z)$ looks like:
 \begin{center}
    \begin{tabular}{ r | c c c c c}
           &  0 &  1 &  2 & $\cdots$ &  n+2 \\
    \hline
        0 &  1 &  · &  · &  · &  · \\  
        1 &  · & $\beta_{1,2}$ & $\beta_{2,3}$ & $\cdots$ & $\beta_{n+2,n+3}$ \\
        · &  · &  · &  · &  · &  · \\
      n &  · &  $\beta_{1,1+n}$ &  $\beta_{2,2+n}$ &  $\cdots$ &  $\beta_{n+2,2n+2}$ \\    
    \end{tabular}
    \end{center}
 where $$
\beta_{i,j}=
\begin{cases}
    \beta_{i,n+i}=\binom{n+1}{i-1} \text{ for } 1\le i \le n+2; \\\\
    \beta_{i,i+1}=i\binom{n+3}{i+1}-\binom{n+1}{i-1} \text{ for } 1\le i \le n+2.
\end{cases}
$$
 \end{itemize}
    
\end{proposition}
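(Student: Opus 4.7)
The strategy is to reduce the computation to an Artinian quotient that fits the framework of Lemma \ref{resolutionFromHVector}. For part (1), the support of $q_i$ is the single point $\{x_0=\cdots=x_n=\ell_i=0\}$, and the $p_i$ are distinct in $\PP^{n+2}$ because the $\ell_i$ are pairwise linearly independent; hence $Z$ is a disjoint union of $n+1$ length-$2$ schemes, giving $\deg(Z)=2(n+1)$.

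For parts (2) and (3), I would first record three natural families of elements inside $I:=\bigcap_i I_i$: the quadrics $x_j x_k$ for all $0\le j,k\le n$ (each lies in every $I_i$, since if $j\ne i$ or $k\ne i$ then one factor lies in $I_i$, and otherwise $j=k=i$ and $x_i^2\in I_i$); the quadrics $x_i\ell_i$ for $0\le i\le n$ (since $\ell_i\in I_i$ and $x_i\in I_k$ for $k\ne i$); and the product $\prod_{i=0}^n \ell_i$ of degree $n+1$ (each $I_j$ contains the factor $\ell_j$). Now pick a generic linear form $L=u-cv$, with $c$ chosen so that $L$ is not a scalar multiple of any $\ell_i$. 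Since the associated primes of $I$ are the maximal ideals $(x_0,\dots,x_n,\ell_i)$ of the points $p_i$, this choice makes $L$ a non-zerodivisor on $R/I$. Identifying $R/(L)$ with the polynomial ring $K[x_0,\dots,x_n,v]$ via $u\equiv cv$, each $\ell_i$ becomes a nonzero scalar multiple of $v$, so the three families above reduce modulo $L$ to give
\[
(L,\, x_j x_k,\, x_i v,\, v^{n+1}) \;\subseteq\; I + (L).
\]
The monomials surviving in the left-hand quotient are precisely $1,x_0,\dots,x_n,v,v^2,\dots,v^n$, so that quotient has $K$-dimension $1+(n+2)+(n-1)=2(n+1)$. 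As $L$ is a non-zerodivisor, the Artinian reduction $R/(I+L)$ also has dimension $\deg(Z)=2(n+1)$, forcing equality of ideals. Reading off the Hilbert function degree by degree yields the $h$-vector $(1,n+2,1,\dots,1)$ claimed in (2).

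For (3), the Artinian reduction now fits exactly the model of Lemma \ref{resolutionFromHVector} after renaming $v$ as the distinguished variable $x_{c-1}$, with $c=n+2$ and $e=n$; substituting these values into the Betti number formulas there reproduces the table in the proposition. Finally, since $L$ is a linear non-zerodivisor, tensoring the minimal free resolution of $R/I$ over $R$ with $R/(L)$ gives a minimal free resolution of the Artinian reduction over $R/(L)$, so the graded Betti numbers of $R/I$ and of its Artinian reduction coincide. The main point requiring care is the dimension count establishing the equality of ideals; in particular, the generator $\prod_i\ell_i$ is indispensable, as it is what provides the relation $v^{n+1}$ that fixes the socle degree of the reduction at $n$.
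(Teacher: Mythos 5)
Your proof is correct and follows essentially the same route as the paper: identify the (generators of the) ideal, compute the $h$-vector of the artinian reduction, and invoke Lemma \ref{resolutionFromHVector} with $c=n+2$, $e=n$; your explicit hyperplane section $L=u-cv$ together with the containment-plus-dimension-count argument is just a more carefully justified version of the paper's direct Hilbert function computation. The only point to flag is that for $n=2$ you are applying Lemma \ref{resolutionFromHVector} with $e=2$, outside its stated hypothesis $e\ge 3$ (the paper disposes of $n=2$ separately as ``immediate''); since you know the reduced ideal explicitly, this is harmless but deserves a remark.
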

\begin{proof}
    (1) Since $Z$ is a 0-dimensional scheme consisting of $n+1$ double points we have $\deg(Z)=2(n+1)$. \\
    (2) Note that $I$ is generated by:
    \begin{itemize}
        \item $x_ix_j$ for all $i,j \in \{0,1,\dots,n\}$;
        \item $x_0\ell_0$, $x_1\ell_1$, \dots, $x_n\ell_n$;
        \item $\ell_0\cdot \ell_1\cdot...\cdot \ell_n$.
    \end{itemize}
    Hence, $HF_{R/I}(1)=n+3$. Moreover, since all the generators of $I$ have degree 2 except for one of degree $n+1$, the Hilbert function $HF_{R/I}(t)$ strictly increases up until $t=n$, where it reaches the constant value of the Hilbert polynomial, that is, $HF_{R/I}(t)=2n+2$ for all $t\ge n$. In consequence, the only possible Hilbert function  $HF_{R/I}(t)$ for $R/I$ is
    $$
    (1,n+3,n+4,\dots,2n+2,2n+2,\dots).
    $$
    Therefore, the $h$-vector of its artinian reduction  is given by $$\Delta HF_{R/I}(t)=HF_{R/I}(t)-HF_{R/I}(t-1)=(1,n+2,1,\dots,1).$$
    (3) The case $n=2$ is immediate. The general case  follows from Lemma \ref{resolutionFromHVector} (with $c=n+2$ and $e=n$), taking into account that the Betti table of $R/I$ coincides with the Betti table of its artinian reduction.
\end{proof}

\begin{theorem}\label{doublingPerazzoFull} Fix an integer $d\ge 2$.  Any full Perazzo algebra $A_F$  with dual generator $F\in K[X_0,..,X_d,U,V]$ and  socle degree $d+1$ is a doubling of a 0-dimensional scheme $Z_F\subset \PP^{d+2}$  consisting of $d+1$ double points on a line.
\end{theorem}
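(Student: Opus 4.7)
The plan is to construct the candidate scheme $Z_F$ explicitly from $F$ and verify that the surjection $R/I(Z_F)\twoheadrightarrow A_F$ is the right-hand arrow of a doubling short exact sequence.

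First, since $\{p_0,\ldots,p_d\}$ is a basis of $K[U,V]_d$ and so is $\{L_0^{[d]},\ldots,L_d^{[d]}\}$ for any $d+1$ pairwise non-proportional linear forms $L_i=a_iU+b_iV\in K[U,V]_1$ (by a Vandermonde-type argument), a linear change of coordinates on $X_0,\ldots,X_d$ (which induces an isomorphism of Perazzo algebras) lets me assume $F=\sum_{i=0}^d X_iL_i^{[d]}$. I define $\ell_i:=b_iu-a_iv\in R_1$, the linear form apolar to $L_i$; a direct contraction shows $\ell_i\circ L_i^{[d]}=0$, and the $\ell_i$'s are pairwise linearly independent. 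Following Notation \ref{doublePointNotation} with these $\ell_i$'s, I obtain a zero-dimensional subscheme $Z_F=\{q_0,\ldots,q_d\}\subset\PP^{d+2}$ of $d+1$ double points supported on a single line, whose ideal $J:=I(Z_F)=\bigcap_{i=0}^d I_i$ and Betti numbers are given by Proposition \ref{zeroSchemeResolutionViaArtinianReduction}.

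Second, I would verify $J\subset\Ann(F)$ generator by generator: $x_ix_j\circ F=x_i\circ p_j=0$ since $p_j$ has no $X$-variables; $x_i\ell_i\circ F=\ell_i\circ p_i=\ell_i\circ L_i^{[d]}=0$ by the choice of $\ell_i$; and $(\prod_i\ell_i)\circ F=0$ because $\prod_i\ell_i\in K[u,v]_{d+1}$ contracts $F$ into degree $-1$. This produces a surjection $\pi\colon R/J\twoheadrightarrow A_F$. To identify it with a doubling, note that $R/J$ is Cohen–Macaulay of codimension $d+2$ and satisfies $G_0$ (each minimal prime of $J$ corresponds to a reduced point, where $R/J$ is Gorenstein), while $A_F$ is artinian Gorenstein of codimension $d+3$. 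These are exactly the hypotheses behind Definition \ref{doubling}: the kernel $\ker\pi=\Ann(F)/J$ must then be a rank-one maximal Cohen–Macaulay $R/J$-module, hence a twist $\omega_{R/J}(-\delta)$ of the canonical module. A direct Macaulay–Matlis count gives $A_F$ the symmetric $h$-vector $(1,d+3,d+3,\ldots,d+3,1)$ of socle degree $d+1$, and a short Hilbert-series computation then forces $\delta=d+1$.

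The main obstacle is passing from the Hilbert-function match to an actual isomorphism $\ker\pi\cong\omega_{R/J}(-(d+1))$, since agreement of Hilbert functions alone is not enough. The cleanest route is to produce an explicit degree-$(d+1)$ generator of $\Ann(F)/J$ and identify it, under the CM duality on $R/J$, with the canonical socle generator of the artinian reduction of $R/J$; the explicit form $F=\sum X_iL_i^{[d]}$ together with the concrete description of the double points $q_i$ should make this identification natural. Once this is secured, the doubling short exact sequence $0\to\omega_{R/J}(-(d+1))\to R/J\to A_F\to 0$ holds, exhibiting $A_F$ as the doubling of $Z_F$ as required.
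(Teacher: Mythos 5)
Your setup --- normalizing $F=\sum_i X_iL_i^{[d]}$, taking $\ell_i$ apolar to $L_i$, forming $Z_F$ as in Notation \ref{doublePointNotation}, and checking generator by generator that $I(Z_F)\subset\Ann_R(F)$ --- coincides with the paper's and is fine. The genuine gap is exactly where you flag it, and neither your main argument nor your proposed repair closes it. The assertion that $\ker\pi=\Ann_R(F)/I(Z_F)$, being a rank-one maximal Cohen--Macaulay $R/I(Z_F)$-module, ``must be a twist of the canonical module'' is false: $R/I(Z_F)$ itself is a rank-one MCM module over itself, yet it is not a twist of $\omega_{R/I(Z_F)}$, since by Proposition \ref{zeroSchemeResolutionViaArtinianReduction} the last free module in its minimal resolution is $R(-d-3)^{d+1}\oplus R(-2d-2)$, so $R/I(Z_F)$ has Cohen--Macaulay type $d+2>1$ and is not Gorenstein. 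Over a non-Gorenstein one-dimensional CM ring there are in general many pairwise non-isomorphic rank-one MCM modules, and deciding whether $\Ann_R(F)/I(Z_F)$ is the canonical one is precisely the content of the theorem; the paper itself recalls that not every artinian Gorenstein ideal is a doubling, so no such abstract implication can hold. The Hilbert-series computation of $\delta$ is only a consistency check, not an identification of the module.

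Your suggested fix is also structurally insufficient: $\omega_{R/I(Z_F)}(-d-1)$ is minimally generated by $d+2$ elements (one of degree $2$ and $d+1$ of degree $d+1$, read off from the dualized resolution), so exhibiting a single degree-$(d+1)$ element of $\Ann_R(F)/I(Z_F)$ cannot determine an isomorphism; moreover the artinian reduction of $R/I(Z_F)$ has socle dimension $d+2$, so ``the canonical socle generator'' is not a well-defined object. What the paper does instead is (i) pin down a minimal generating set of $\Ann_R(F)$ --- the quadrics $x_ix_j$ and $x_i\ell_i$, one extra quadric $q$ whose existence is forced by the known $h$-vector $(1,d+3,\dots,d+3,1)$, and the $d+2$ monomials $u^{d+1},\dots,v^{d+1}$ --- and (ii) define $\psi\colon\omega_{R/I(Z_F)}(-d-1)\to R/I(Z_F)$ explicitly on all $d+2$ generators (the degree-$2$ generator goes to $q$, the degree-$(d+1)$ generators to the classes of $u^{d+1},\dots,v^{d+1}$), verifying that these images satisfy the relations of the generators of $\omega_{R/I(Z_F)}(-d-1)$ and that $\psi$ is injective, whence its cokernel is $A_F$. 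Some version of this explicit generators-and-relations matching, or another genuine argument identifying $\Ann_R(F)/I(Z_F)$ with $\omega_{R/I(Z_F)}(-d-1)$, is what your proof still needs.
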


\begin{proof} We first observe that,  after a change of coordinates, the dual generator $F$ of a full Perazzo  algebra $A_F$ can be expressed $F=X_0L_0^d+\cdots +X_dL_d^d$ where $L_i\in K[U,V]_1$ are linear forms two by two linearly independent. Actually, this follows from the fact that $L_0^d,\cdots ,L_d^d$, with $L_i=\alpha _iU+\gamma_iV$ general linear forms, is a basis of $K[U,V]_d$.

We will now compute a minimal set of generators of $\Ann_R(F)$. We first observe that the following set $\cA$ of polynomials  is part of a minimal set of generators of $\Ann_R(F)$:
    \begin{itemize}
  \item $x_ix_j$ for all $i,j \in \{0,1,\dots,d\}$;
        \item $x_0\ell_0$, $x_1\ell_1$, \dots, $x_d\ell_d$ with $\ell _i=\gamma_iu-\alpha_iv$;
        \item $u^{d+1}$, $u^dv$,\dots,$uv^d$,$v^{d+1}$.
           \end{itemize}
           Now, from \cite[cf. proof Theorem 3.4]{MP} we know that the $h$-vector of $A_F$ is 
           $$
           (1,d+3,d+3,\cdots ,d+3,d+3,1).
           $$ 
           Therefore, we conclude that there is a unique quadric $q\in K[x_0,\cdots ,x_d,u,v]$ such that $\cA \cup\{q\}$ is a minimal system of generators of $\Ann_R(F)$. 

We consider now the 0-dimensional subscheme $Z_F\subset \PP^{d+2}$ with homogeneous ideal $I(Z_F)=\cap _{i=0}^d(x_0,\cdots , x_i^2,\cdots,x_d,\ell _i)$. It holds $I(Z_F)\subset \Ann_R(F)$ and, using Proposition \ref{zeroSchemeResolutionViaArtinianReduction},  we will prove that $A_F$ is a doubling of $Z_F$. 

We first observe that $\dim [R/I(Z_F)]_2=\dim  [A_F]_2+1$ and that $\Ann_R(F)$ has $d+1$ more generators of degree $d+1$ than $I(Z_F)$ does. On the other hand, by Proposition \ref{zeroSchemeResolutionViaArtinianReduction} we know a minimal free resolution of $R/I(Z_F)$:
$$0\rightarrow \begin{array}{c}
R(-d-3)^{\beta_{d+2,d+3} }\\
\oplus \\
R(-2(d+1))^{\beta_{d+2,2(d+1)}}
\end{array}
\rightarrow \cdots  
\rightarrow \begin{array}{c}
R(-2)^{\beta_{1,2}} \\
\oplus \\
R(-d-1)^{\beta_{1,d+1}}
\end{array}
\rightarrow 
I(Z_F) \rightarrow 0
$$
 where $$
\beta_{i,j}=
\begin{cases}
    \beta_{i,d+i}=\binom{d+1}{i-1} \text{ for } 1\le i \le d+2; \\\\
    \beta_{i,i+1}=i\binom{d+3}{i+1}-\binom{d+1}{i-1} \text{ for } 1\le i \le d+2.
\end{cases}
$$
as well as a minimal free resolution of the canonical module $\omega_{R/I(Z_F)}$ of $R/I(Z_F)$:
$$
0\rightarrow  R(-d-3) \rightarrow 
 \begin{array}{c}
R(-2)^{\beta_{1,d+1} }\\
\oplus \\
R(-d-1)^{\beta_{1,2}}
\end{array}
\rightarrow \cdots \rightarrow
\begin{array}{c}
R(d-1)^{\beta_{d+2,2(d+1)}} \\
\oplus \\
R^{\beta_{d+2,d+3}}
\end{array}
\rightarrow 
\omega _{R/I(Z_F)}\rightarrow 0.
$$
So, $\omega _{R/I(Z_F)}(-d-1)$ has $\beta_{d+2,d+3}=d+1$  generators of degree $d+1$ and one generator of degree $2$. Thus, since we know a minimal system of generators of $\Ann_R(F)$, we can construct the exact sequence
    \begin{equation}\label{doublingSequence}
        0 \rightarrow \omega_{R/I(Z_F)}(-d-1) \xrightarrow{\psi} R/I(Z_F) \rightarrow R/\Ann_R(F) \rightarrow 0
    \end{equation}
sending the generator of degree $2$ of  $\omega _{R/I(Z_F)}(-d-1)$ to $q$ and the $d+1$ generators of degree $d+1$ to the different classes of $u^{d+1}$, $u^dv$,\dots,$uv^d$,$v^{d+1}$ modulo $I(Z_F)$.
The  map $\psi $ is well defined since these elements satisfy the
same relations that the generators of $\omega_{R/I(Z_F)}(-d-1)$ and it is injective. Therefore, we conclude that $A_F$ is a doubling of $Z_F$.
\end{proof}

As an application we get:

\begin{theorem}\label{mainResult0} Fix an integer $d\ge 2$. The Betti table of a full Perazzo algebra $A_F$ associated to a form $F\in K[X_0,\cdots, X_d,U,V]_{d+1}$ is:

\vskip 4mm
 \begin{center}
    \begin{tabular}{ r | c c c c c c}
           &  0 &  1 &  2 & $\cdots$ &  d+2 & d+3 \\
    \hline
        0 &  1 &  · &  · &  · &  · & .\\  
        1 &  · & $\alpha_{2}$ & $\alpha_{3}$ & $\cdots$ & $\alpha_{d+3}$ & . \\
        · &  · &  · &  · &  · &  · & · \\
        d & · & $\alpha_{d+3}$ & $\cdots$ & $\alpha_{3}$ & $\alpha_{2}$ & . \\
        d+1 &  · &  · &  · &  · &  · & 1\\
    \end{tabular}
    \end{center}
    where $\alpha_i=(i-1)\binom{d+3}{i}$ for $2\le i \le d+3$.
\end{theorem}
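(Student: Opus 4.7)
The plan is to apply the mapping cone construction to the doubling short exact sequence
\[
0 \to \omega_{R/I(Z_F)}(-d-1) \xrightarrow{\psi} R/I(Z_F) \to A_F \to 0
\]
provided by Theorem \ref{doublingPerazzoFull}, using the explicit minimal free resolution of $R/I(Z_F)$ from Proposition \ref{zeroSchemeResolutionViaArtinianReduction}. First I would write out the minimal free resolution $F_\bullet$ of $R/I(Z_F)$: for $1\le n\le d+2$ one has
\[
F_n = R(-(n+1))^{\,n\binom{d+3}{n+1}-\binom{d+1}{n-1}} \oplus R(-(d+n))^{\binom{d+1}{n-1}},
\]
together with $F_0=R$. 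Since $R/I(Z_F)$ is arithmetically Cohen-Macaulay of codimension $d+2$ inside a polynomial ring with $d+3$ variables, applying $\Hom_R(-,R(-d-3))$ to $F_\bullet$ and reversing produces a minimal free resolution of $\omega_{R/I(Z_F)}$. After twisting by $-(d+1)$ and using the identity $\binom{d+1}{n-1}=\binom{d+1}{d+2-n}$, this yields a minimal free resolution $\tilde{G}_\bullet$ of $\omega_{R/I(Z_F)}(-d-1)$ with
\[
\tilde{G}_{n-1} = R(-(n+1))^{\binom{d+1}{n-1}} \oplus R(-(n+d))^{\,(d+3-n)\binom{d+3}{n-1}-\binom{d+1}{n-1}}
\]
for $1\le n\le d+2$, and $\tilde{G}_{d+2}=R(-(2d+4))$.

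Next I would form the mapping cone $C_\bullet$ of a lift $\tilde{\psi}:\tilde{G}_\bullet\to F_\bullet$ of $\psi$, giving a free resolution of $A_F$ of length $d+3$ with $C_n=F_n\oplus\tilde{G}_{n-1}$ for $1\le n\le d+2$, $C_0=R$, and $C_{d+3}=R(-(2d+4))$. The key computational observation is that, when adding contributions of equal shift from $F_n$ and $\tilde{G}_{n-1}$, the ``correction'' terms $\mp\binom{d+1}{n-1}$ cancel in pairs, leaving
\[
\beta^{C}_{n,n+1} = n\binom{d+3}{n+1} = \alpha_{n+1}, \qquad \beta^{C}_{n,n+d} = (d+3-n)\binom{d+3}{d+4-n}=\alpha_{d+4-n},
\]
together with $\beta^{C}_{0,0}=\beta^{C}_{d+3,2d+4}=1$. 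This is precisely the Betti table in the statement.

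The main obstacle is to verify that the mapping cone $C_\bullet$ is already minimal, that is, that the lift $\tilde{\psi}$ has all its entries in the maximal ideal $\mathfrak{m}$. Comparing shifts, $F_{n-1}$ has summands with twists $n$ and $n+d-1$, while $\tilde{G}_{n-1}$ has summands with twists $n+1$ and $n+d$, so every pairwise difference is strictly positive whenever $d\ge 3$; consequently $\tilde{\psi}$ is automatically in $\mathfrak{m}$ and minimality of $C_\bullet$ follows at once. The borderline case $d=2$ is more delicate because the shifts $n+1$ in $\tilde{G}_{n-1}$ and $n+d-1=n+1$ in $F_{n-1}$ coincide, opening the possibility of scalar entries in $\tilde{\psi}$. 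I would handle this case by invoking the explicit description of $\psi$ given in the proof of Theorem \ref{doublingPerazzoFull}: the $d+1$ degree-$(d+1)$ generators of $\omega_{R/I(Z_F)}(-d-1)$ map to the classes of $u^{d+1},u^dv,\ldots,v^{d+1}$ modulo $I(Z_F)$, which are linearly independent over $K$, and a direct analysis of the first syzygies then shows that no scalar entry can arise in $\tilde{\psi}$, so the mapping cone remains minimal. Once minimality is established, the Betti table is precisely the one computed in the previous paragraph.
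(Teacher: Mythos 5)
Your proposal follows essentially the same route as the paper: apply the mapping cone to the doubling sequence of Theorem \ref{doublingPerazzoFull}, read off the Betti numbers from Proposition \ref{zeroSchemeResolutionViaArtinianReduction} and its dualized, twisted resolution, and observe that the $\binom{d+1}{\cdot}$ correction terms cancel (the paper computes only the first row and invokes Gorenstein self-duality, while you compute both rows directly, which is the same calculation). The only substantive difference is that you explicitly address minimality of the mapping cone, which the paper asserts without comment; your degree argument for $d\ge 3$ is sound (the relevant twist differences are nonzero, though not all positive as you claim), and while your treatment of the borderline case $d=2$ is only a sketch, it is still more careful than the paper's.
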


\begin{proof} 
    According to Theorem \ref{doublingPerazzoFull}, $A_F$ is a doubling of a 0-dimensional scheme $Z_F\subset \PP^{d+2}$  consisting of $d+1$ double points on a line. Using the mapping cone process in \ref{doublingSequence} one  computes a  minimal free resolution of $A_F$. Since $A_F$ is a Gorenstein ring, its minimal free resolution is self-dual up to twist. Therefore, we only need to compute the graded Betti numbers of the $1$-st non-zero row of its Betti table. For $2\le i \le d+3$, let $\beta_{i-1,i}$ and $\beta_{i-1,d+i-1}$ be the graded Betti numbers of the $1$-st and $d$-th rows of the Betti table of $R/I(Z_F)$, respectively. Then,
    $$\begin{array}{rcl}
    \alpha_i & = & \beta_{i-1,i} + \beta_{d+4-i,2d+4-i} \\\\
    & = &  (i-1)\binom{d+3}{i} - \binom{d+1}{i-2} + \binom{d+1}{d+3-i} \\\\
    & = & (i-1)\binom{d+3}{i}
    \end{array},$$  
concluding the proof.
\end{proof}

\begin{example} 
We consider a full Perazzo algebra $A_F$ with dual generator  $F=X_0U^3+X_1V^3+X_2(U+V)^3+X_3(U+3V)^3\in S_4$. By Theorem \ref{doublingPerazzoFull}, $A_F$ is the doubling of the 0-dimensional subscheme $Z_F\subset \PP^5$ with homogeneous ideal $$I(Z_F)=(x_0^2,x_1,x_2,x_3,v)\cap (x_0,x_1^2,x_2,x_3,u)\cap (x_0,x_1,x_2^2,x_3,u-v)\cap (x_0,x_1,x_2,x_3^3,3u-v)$$ 

\noindent and   the Betti table of $R/\Ann_R(F)$ is:

    \begin{center}
    \begin{tabular}{ r | c c c c c c c}
           &  0 &  1 &  2 & 3 & 4 & 5 & 6 \\
    \hline
        0 &  1 & · &  · &  · &  · &  . & · \\  
        1 &  · & 15 & 40 & 45 & 24 & 5 & · \\
        2 &  · & · &  · &  · & · & . & · \\
        3 &  · & 5 & 24 & 45 & 40 & 15 & · \\
        4 &  · & · & · & · & · & · & 1
    \end{tabular}
    \end{center}
All examples have been checked using Macaulay2 \cite{M2}        
\end{example}

\section{Minimal free resolutions for full Perazzo algebras}\label{mainSection}
\label{MFRPerazzo}

The goal of this section  is to determine the graded Betti numbers of {\em any} full Perazzo algebra $A_F$ with dual generator $F\in K[X_0,\dots,X_n,U_1,\dots,U_m]$. Notice that, in this case, there are no such algebras for each $n$, because we need to impose $\binom{d+m-2}{m-1}=n+1$, where now $d$ represents the degree of the corresponding Macaulay dual generator $F$. As a first step we will prove
 that any full Perazzo algebra $A_F$ is the doubling of a  0-dimensional scheme $Z_F\subset \PP^{n+m}=\Proj(R)$ consisting of $n+1$ double points.

\vskip 4mm Let us start this section with some preliminary results  and with a couple of technical lemmas.

\begin{definition}
    Let $Mon_d(R)$ be the set of all monomials in $R=K[x_1,\cdots ,x_r]$ of degree $d$. For any monomial $u\in R$, we set $m(u):=max\{i:x_i \text{ divides } u\}$. A set $\mathcal{L}\subset Mon_d(R)$ is \textit{stable} if $x_i(u/x_{m(u)})\in \mathcal{L}$ for all $u\in \mathcal{L}$ and all $i< m(u)$. A monomial ideal $J:=(u_1,\dots,u_m)\subset R$ is called \textit{stable} if $x_i(u_j/x_k)\in J$ for all $j\in\{1,\dots,m\}$ and all $i<k$ such that $x_k$ divides $u_j$.
\end{definition}

\begin{example}Consider $R=K[x_1,x_2,x_3]$

    (1) The monomial  ideal $I=(x_1^3,x_1^2x_2,x_1^2x_3)\subset R$ is stable. 

    (2) The monomial  ideal $J=(x_1^3,x_1^2x_2,x_2^2x_3)\subset R$ is not stable because $x_1x_2^2 \not\in J$.
\end{example}

\begin{lemma}\label{h2GeneralFullPerazzo} Fix $d\ge 4$ and let $A_F$ be a full Perazzo algebra with Macaulay dual generator 

$$
F\in K[X_0,\dots ,X_n,U_1,\dots,U_m]_d.
$$
Then:
$$
HF_{A_F}(i)=
\begin{cases}
    \binom{i+m-1}{m-1} + \binom{d-i+m-1}{m-1} \quad \text{for } 1\le i \le \lfloor d/2 \rfloor \\
    \text{symmetry}
\end{cases} .
$$
\end{lemma}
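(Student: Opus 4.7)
My plan is to compute $HF_{A_F}(i)$ directly via Macaulay-Matlis duality, using the identification $\dim_K [A_F]_i = \dim_K (R_i \circ F)$ with $R_i \circ F \subseteq S_{d-i}$. The structural fact that $F = X_0 p_0 + \cdots + X_n p_n$ is \emph{linear} in the $X_j$'s, combined with the hypothesis that $\{p_0,\dots,p_n\}$ is a basis of $K[U_1,\dots,U_m]_{d-1}$, reduces the problem to two transparent dimension counts.

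First I would decompose $R_i = R_i^{(0)} \oplus R_i^{(1)} \oplus R_i^{(\geq 2)}$ according to the total degree in $x_0,\dots,x_n$. Because $F$ has $X$-degree $1$, every monomial in $R_i^{(\geq 2)}$ contracts $F$ to $0$, so $R_i \circ F = R_i^{(0)} \circ F + R_i^{(1)} \circ F$. Next I would observe that these two summands land in \emph{different} graded pieces of $S$: for $\phi \in R_i^{(0)} = K[u_1,\dots,u_m]_i$, one has $\phi \circ F = \sum_j X_j (\phi \circ p_j)$, of $X$-degree $1$; for a monomial $x_j u^\beta \in R_i^{(1)}$ with $|\beta| = i-1$, one has $(x_j u^\beta) \circ F = u^\beta \circ p_j \in K[U_1,\dots,U_m]_{d-i}$, of $X$-degree $0$. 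Hence the sum is direct in $S$ and the two contributions to $\dim_K [A_F]_i$ add.

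Then I would compute each dimension using non-degeneracy of the contraction pairing \eqref{eq:MDPairing} together with the basis hypothesis on $\{p_j\}$. For the first summand, the $K$-linear map $\phi \mapsto \sum_j X_j(\phi \circ p_j)$ has kernel $\{\phi \in K[u]_i : \phi \circ p_j = 0 \text{ for all } j\}$; since $\{p_j\}$ spans $K[U]_{d-1}$, this kernel equals $\{\phi : \phi \circ K[U]_{d-1} = 0\}$, which is $0$ by non-degeneracy of the pairing for $1 \le i \le d-1$. This gives the contribution $\binom{i+m-1}{m-1}$. For the second summand, the image $\{\sum_j g_j \circ p_j : g_j \in K[u]_{i-1}\}$ equals $K[u]_{i-1} \circ K[U]_{d-1}$, which by the same non-degeneracy (at level $d-i$) fills all of $K[U]_{d-i}$, of dimension $\binom{d-i+m-1}{m-1}$. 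Summing yields the stated formula for $1 \le i \le \lfloor d/2 \rfloor$; the ``symmetry'' branch is automatic because $A_F$ is Gorenstein with socle degree $d$.

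The step requiring the most care is the graded bookkeeping that certifies a direct sum in $S$, namely verifying that $R_i^{(0)} \circ F$ (living in $X$-degree $1$) meets $R_i^{(1)} \circ F$ (living in $X$-degree $0$) only at $0$; once that is clear, each dimension count reduces to a one-line application of non-degeneracy of the Macaulay-Matlis pairing on $K[U_1,\dots,U_m]$.
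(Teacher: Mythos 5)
Your argument is correct, but it is a genuinely different route from the paper's: the paper proves this lemma by a one-line citation to \cite[Proposition 3.3]{CGIZ}, whereas you give a short self-contained computation. Your key observations all check out: since a full Perazzo form has no $G$-term and is linear in the $X_j$'s, every monomial of $x$-degree $\ge 2$ kills $F$, so $R_i\circ F$ splits by $X$-degree into $R_i^{(0)}\circ F$ (landing in $X$-degree $1$) and $R_i^{(1)}\circ F$ (landing in $X$-degree $0$), and the sum is direct. The injectivity of $\phi\mapsto\sum_j X_j(\phi\circ p_j)$ on $K[u]_i$ follows, as you say, from the basis hypothesis on $\{p_j\}$ together with faithfulness of the contraction action of $K[u]_i$ on $K[U]_{d-1}$ for $i\le d-1$ (e.g.\ because $(\psi\phi)\circ P=\psi\circ(\phi\circ P)$ and the degree-$(d-1)$ pairing is perfect), giving $\binom{i+m-1}{m-1}$; and surjectivity of $K[u]_{i-1}\circ K[U]_{d-1}\twoheadrightarrow K[U]_{d-i}$ gives $\binom{d-i+m-1}{m-1}$. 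A sanity check against the toy case ($m=2$, socle degree $d+1$) reproduces the $h$-vector $(1,d+3,\dots,d+3,1)$ quoted in Theorem~\ref{doublingPerazzoFull}. What your approach buys is transparency and self-containedness --- indeed it establishes the formula directly for all $1\le i\le d-1$ (the expression is symmetric under $i\mapsto d-i$), so the ``symmetry'' branch need not be invoked separately --- while the paper's citation buys brevity by outsourcing the count to a known result on Hilbert functions of Gorenstein algebras. The only step I would ask you to write out fully in a final version is the faithfulness claim for the unequal-degree contraction pairing, which you currently compress into the phrase ``by non-degeneracy of the pairing.''
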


\begin{proof}
\cite[Proposition 3.3]{CGIZ}.
\end{proof}

\begin{remark}
    If $d=3$ the previous formula makes no sense, when evaluated at $i=2$. But in this case the Hilbert function is $HF_{A_F}=(1,n+m+1,n+m+1,1)$, for any $n,m \ge 2$ satisfying the full Perazzo condition $\binom{d+m-2}{m-1}=n+1$.
\end{remark}

\begin{lemma}\label{linearSyzygiesIdealOfQuadrics} Fix integers $1\le n<r$.
    Let $R=K[x_1,\dots,x_r]$ and consider the ideal $J$ with the following minimal set of generators:
    \begin{itemize}
        \item $x_ix_j$ \quad where $i,j \in \{1,2,\dots,n\}$;
        \item $x_1x_{n+1},x_1x_{n+2},\dots,x_1x_{r},\dots,x_nx_{n+1},x_nx_{n+2},\dots,x_nx_{r}$.
    \end{itemize}
    Then, $R/J$ has a linear resolution and the graded Betti numbers are given by:
    $$
    \beta_{i,i+1}= i\binom{n+1}{i+1} + n\left( \binom{r}{i} - \binom{n}{i}\right).
    $$    

\end{lemma}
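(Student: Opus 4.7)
My plan is to recognize $J$ as a stable monomial ideal and invoke the Eliahou--Kervaire resolution. The key preliminary observation is that $J$ is stable in the sense already defined in this section: every minimal generator has the form $x_ix_j$ with $\min(i,j)\le n$, and replacing the variable of largest index by any variable of smaller index preserves the condition ``at least one index is $\le n$'', so the shifted monomial still lies in $J$.

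Once stability is established, the Eliahou--Kervaire theorem yields, for a stable ideal whose minimal generators are all in the same degree $d$, a $d$-linear resolution with
$$\beta_{i,\,i+d}^R(J)\;=\;\sum_{u\in G(J)}\binom{m(u)-1}{i},$$
where $m(u)$ denotes the index of the largest variable dividing $u$. In our setting $d=2$, so in particular $R/J$ has a linear resolution. The next step is to group the minimal generators of $J$ by the value of $m(u)$: there are exactly $k$ generators with $m(u)=k$ when $1\le k\le n$ (namely $x_jx_k$ with $1\le j\le k$), and exactly $n$ generators with $m(u)=k$ when $n+1\le k\le r$ (namely $x_jx_k$ with $1\le j\le n$).

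Plugging these counts into the formula above and using the standard shift $\beta_{i,\,i+1}^R(R/J)=\beta_{i-1,\,i+1}^R(J)$ for $i\ge 1$, the Betti number to compute becomes
$$\beta_{i,i+1}^R(R/J)\;=\;\sum_{k=1}^{n} k\binom{k-1}{i-1}\;+\;n\sum_{k=n+1}^{r}\binom{k-1}{i-1}.$$
Applying the identity $k\binom{k-1}{i-1}=i\binom{k}{i}$ together with the hockey-stick identity collapses the first sum to $i\binom{n+1}{i+1}$, and a second application of hockey-stick collapses the tail to $n\bigl(\binom{r}{i}-\binom{n}{i}\bigr)$. Adding these recovers the claimed formula.

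The only modest obstacle is verifying the stability condition cleanly (and remembering the indexing convention in Eliahou--Kervaire); beyond that the argument is just bookkeeping with binomial identities. No deeper input is required.
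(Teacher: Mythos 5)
Your proof is correct and follows essentially the same route as the paper: both verify that $J$ is a stable ideal generated in degree $2$, invoke the Eliahou--Kervaire resolution (the paper's citation \cite[Corollary 7.2.3]{HH}) to get linearity and the formula $\beta_{i,i+1}(R/J)=\sum_{k=1}^{r}m_k\binom{k-1}{i-1}$, and count generators by largest index ($m_k=k$ for $k\le n$, $m_k=n$ for $n<k\le r$). The only difference is that you carry out the closing binomial-identity simplification explicitly, which the paper leaves to the reader.
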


\begin{proof}
First of all, notice that $J$ is a stable ideal whose generators are all of degree two. Therefore, by \cite[Corollary 7.2.3]{HH}, it has a linear resolution:
\begin{equation}\label{mfr}
    0 \rightarrow R(-r-1)^{\beta_{r,r+1}} \rightarrow \cdots \rightarrow R(-3)^{\beta_{2,3}} \rightarrow R(-2)^{\beta_{1,2}}\rightarrow R \rightarrow R/J \rightarrow 0 
\end{equation}
Let us determine the graded Betti numbers  $\beta_{i,i+1}(R/J)$. To this end, we denote by  $m_k$ the number of generators in $J$ whose greatest subindex equals $k$. We have:

$$
m_k=\begin{cases}
    k \quad \text{if} \ k\le n \\
    n \quad \text{if} \ n < k \le r
\end{cases}
$$
Now the result follows from the well-known Eliahou-Kervaire formula for the Betti numbers (cf. \cite[Corollary 7.2.3]{HH})
$\beta_{i,i+1}(R/J)=\sum_{k=1}^r\binom{k-1}{i-1}m_k$.
\end{proof}

\begin{lemma}\label{intersectionOfIdeals}
    Let $R=K[u_1,\dots,u_m]$. Let $d\ge 3$ and consider a set $P$ of $\binom{d+m-2}{m-1}$ general points. Then the Hilbert function of its defining ideal $I$ is
    \begin{equation}\label{hf}
    HF_{R/I}(t)=\left( 1, m, \binom{m+1}{m-1},\binom{m+2}{m-1},\dots, \binom{d+m-2}{m-1},  \binom{d+m-2}{m-1}, \dots  \right).
    \end{equation}
Moreover, $I$  has only generators of degree $d$ and there are exactly $\binom{d+m-2}{m-2}$.
\end{lemma}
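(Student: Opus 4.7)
The plan is to exploit the general position of the points together with a standard regularity argument. First, since the $N := \binom{d+m-2}{m-1}$ points of $P$ are general, the evaluation map $\mathrm{ev}_t \colon R_t \to K^N$ has maximal rank $\min(\dim_K R_t,\, N)$ for every $t \geq 0$; this is the classical assertion that general points impose independent conditions on forms of every degree. Consequently,
\[
HF_{R/I}(t) \;=\; \min\!\left(\binom{t+m-1}{m-1},\, N\right).
\]
Since $\binom{t+m-1}{m-1}$ is strictly increasing in $t$ and first reaches $N$ precisely at $t = d-1$, this immediately yields the Hilbert function announced in \eqref{hf}.

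Next, observe that $HF_{R/I}(t) = HF_R(t)$ for $0\le t\le d-1$, so $I_t = 0$ in that range and $I$ has no minimal generator in any degree $\le d-1$. In degree $d$, Pascal's identity gives
\[
\dim_K I_d \;=\; \binom{d+m-1}{m-1} - \binom{d+m-2}{m-1} \;=\; \binom{d+m-2}{m-2},
\]
which is the desired count; since $I_{d-1}=0$, all of these forms are minimal generators.

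It remains to rule out minimal generators in degree $> d$, and I view this as the main obstacle. To handle it, I would invoke the standard fact that for a $0$-dimensional scheme $Z \subset \PP^{m-1}$ the Castelnuovo--Mumford regularity of $R/I_Z$ equals the least integer $r$ with $HF_{R/I_Z}(s) = \deg Z$ for all $s\ge r$. In the present situation this gives $\mathrm{reg}(R/I) = d-1$, hence $\mathrm{reg}(I) = d$, and therefore every minimal generator of $I$ lies in degree $\le d$. Combined with the previous paragraph, $I$ is generated purely in degree $d$ with exactly $\binom{d+m-2}{m-2}$ generators, finishing the argument. The point that genuinely requires care is precisely the coincidence of the Hilbert-function stabilisation threshold with the regularity, which is obtained by passing to the artinian reduction by a general linear form and reading off the socle degree of the resulting $h$-vector $\bigl(1,\,m-1,\,\binom{m}{m-2},\ldots,\binom{d+m-3}{m-2}\bigr)$.
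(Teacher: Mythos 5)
Your proof is correct and follows essentially the same route as the paper: the general points impose independent conditions on forms of each degree, which gives the Hilbert function, and the count $\binom{d+m-2}{m-2}$ of degree-$d$ generators follows from Pascal's identity. You are in fact more careful than the paper's one-line proof, which never explicitly rules out minimal generators in degrees $>d$; your regularity argument (reading $\mathrm{reg}(R/I)=d-1$ off the socle degree of the artinian reduction, hence $\mathrm{reg}(I)=d$) correctly closes that gap.
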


\begin{proof}
    Such a set of points $P$ imposes independent conditions on forms of degree $\le d-1$. Hence, the Hilbert function of $R/I$ is the cited one and $I$ has exactly $\binom{d+m-2}{m-2}$ generators of degree $d$. 
\end{proof}

From now on, we will say that a set $P$ of $\binom{d+m-2}{m-1}$ points in $\PP^{m-1}$ are in {\em general position} if its homogeneous ideal $I(P)\subset  R=K[u_1,\dots,u_m]$ is generated by $\binom{d+m-2}{m-2}$ forms of degree $d$ and the Hilbert function of $R/I(P)$ is given by (\ref{hf}).

\begin{proposition}\label{bettiNumbersIZF}
    Let $d\ge 3$ and consider, in the linear subspace $\PP^{m-1}$ defined by $x_0=x_1=\cdots =x_n=0$, the 0-dimensional subscheme $Z_F \subset \PP^{n+m}$ consisting of $n+1:=\binom{d+m-2}{m-1}$ double points, with homogeneous ideal 
    $$
    I(Z_F)=\bigcap_{i=0}^n\left(x_0,\dots,x_i^2,\dots,x_n,\ell_{i,2},\dots,\ell_{i,m}\right) \subset K[x_0,\dots,x_n,u_1,\dots,u_m]
    $$
    where $\ell_{i,k}:=\lambda_{i,k}u_1-u_k$ and, for any $k\in\{2,\dots,m\}$ and all $i,j\in\{0,\dots,n\}$, $\lambda_{i,k}\not=\lambda_{j,k}$. Let us also assume that the points 
    $$
    V\left(\bigcap_{i=0}^n(\ell_{i,2},\dots,\ell_{i,m})\right) \subset \PP^{m-1}
    $$
    are in general position. Then, the Betti table of $R/I(Z_F)$ looks like:
\begin{center}
    \begin{tabular}{ r | c c c c c}
           &  0 &  1 &  2 & $\cdots$ &  n+m \\
    \hline
        0 &  1 &  · &  · &  · &  · \\  
        1 &  · & $\beta_{1,2}$ & $\beta_{2,3}$ & $\cdots$ & $\beta_{n+m,n+m+1}$ \\
        . &  · &  · &  · &  · &  · \\
      d-1 &  · &  $\beta_{1,1+d-1}$ &  $\beta_{2,2+d-1}$ &  $\cdots$ &  $\beta_{n+m,n+m+d-1}$\\    
    \end{tabular}
    \end{center}
 where
 $$
 \beta_{i,j}  =\begin{cases}
    \beta_{i,d-1+i}=\sum_{j=0}^{m-2} \Bigl( \binom{j+n+1}{i-1}\sum_{\ell=0}^{d-1}\binom{\ell+j-1}{j-1}\Bigr)\\\\
    \beta_{i,i+1}= i\binom{n+2}{i+1} + (n+1)\Bigl( \binom{n+m}{i} - \binom{n+1}{i} \Bigr)
\end{cases}.
$$
\end{proposition}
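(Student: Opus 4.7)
The plan is to pass to an Artinian reduction, identify the reduced ideal as a sum of powers of maximal ideals, and then compute the Betti numbers via a mapping cone that splits it into its quadratic piece and its degree-$d$ monomial piece. Since $u_1$ is nonzero at every support point of $Z_F$, it is a nonzerodivisor on $R/I(Z_F)$, and the graded Betti numbers of $R/I(Z_F)$ over $R$ therefore coincide with those of the Artinian reduction $\bar R/\bar I$ over $\bar R:=R/(u_1)=K[x_0,\ldots,x_n,u_2,\ldots,u_m]$.

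The key structural identification is
$$
\bar I \;=\; \mathfrak{m}_x^{\,2}+\mathfrak{m}_x\mathfrak{m}_u+\mathfrak{m}_u^{\,d},
$$
with $\mathfrak{m}_x=(x_0,\ldots,x_n)$ and $\mathfrak{m}_u=(u_2,\ldots,u_m)$. The inclusion $\supseteq$ is clear from three visible families of elements of $I(Z_F)$: the quadrics $x_ix_j$, the quadrics $x_i\ell_{i,k}$, and the $\binom{d+m-2}{m-2}$-dimensional space of degree-$d$ forms of $K[u_1,\ldots,u_m]$ vanishing on the $n+1$ reduced points of $Z_F$ supplied by Lemma \ref{intersectionOfIdeals}; under $u_1\mapsto 0$ these families generate $\mathfrak{m}_x^{\,2}$, $\mathfrak{m}_x\mathfrak{m}_u$ and, by general position, all of $K[u_2,\ldots,u_m]_d$. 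The reverse inclusion is verified degree by degree: analyzing the conditions $q\in I_i$ at each double point shows that these three families already exhaust $[I(Z_F)]_t$ in every graded piece.

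Now write $\bar I=J_1+J_2$ with $J_1:=\mathfrak{m}_x(\mathfrak{m}_x+\mathfrak{m}_u)$ and $J_2:=\mathfrak{m}_u^{\,d}$. A direct monomial check gives $J_1\cap J_2=\mathfrak{m}_x\mathfrak{m}_u^{\,d}$, whence the short exact sequence
$$
0\longrightarrow M\longrightarrow \bar R/J_1\longrightarrow \bar R/\bar I\longrightarrow 0,\qquad M:=\mathfrak{m}_u^{\,d}/\mathfrak{m}_x\mathfrak{m}_u^{\,d},
$$
in which $M$ is annihilated by $\mathfrak{m}_x$ and hence isomorphic, as a $K[u_2,\ldots,u_m]$-module, to the ideal $(u_2,\ldots,u_m)^d$. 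Applying Lemma \ref{linearSyzygiesIdealOfQuadrics} with $(n,r)\mapsto(n+1,n+m)$ yields a linear $\bar R$-resolution of $\bar R/J_1$ whose Betti numbers $\beta_{i,i+1}^{\bar R}$ are precisely the first-row entries of the stated table. For $M$, since $\mathfrak{m}_x$ is generated by the regular sequence $x_0,\ldots,x_n$ and annihilates $M$, a minimal $\bar R$-resolution of $M$ is obtained as the total complex of the double complex formed by tensoring the Koszul complex on $x_0,\ldots,x_n$ with a minimal $K[u_2,\ldots,u_m]$-resolution of $\mathfrak{m}_u^{\,d}$; the latter is linear by Eliahou--Kervaire, and a Vandermonde collapse gives $\beta_{i,i+d}^{\bar R}(M)=\sum_{k=1}^{m-1}\binom{n+k}{i}\binom{d+k-2}{k-1}$.

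Finally, because $d\ge 3$, the two linear strands (shift $1$ for $\bar R/J_1$ and shift $d$ for $M$) are disjoint, so every chain-level lift of $M\hookrightarrow\bar R/J_1$ has entries of strictly positive degree and the mapping cone is automatically a minimal $\bar R$-resolution of $\bar R/\bar I$. Reading it off gives $\beta_{i,i+1}^{\bar R}(\bar R/\bar I)=\beta_{i,i+1}^{\bar R}(\bar R/J_1)$ and $\beta_{i,i+d-1}^{\bar R}(\bar R/\bar I)=\beta_{i-1,i-1+d}^{\bar R}(M)$, and these match the formulas in the statement after the substitution $k=j+1$ and the hockey-stick identity $\sum_{\ell=0}^{d-1}\binom{\ell+j-1}{j-1}=\binom{d+j-1}{j}$. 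I expect the main obstacle to be the reverse inclusion in the identification of $\bar I$, i.e.\ ruling out any extra minimal generators of $I(Z_F)$ beyond the three obvious families; once that structural description is in hand, the mapping-cone computation is essentially bookkeeping with binomial identities.
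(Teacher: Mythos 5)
Your proposal is correct, and after the common first step (reducing modulo the nonzerodivisor $u_1$ and identifying the Artinian reduction with the monomial ideal $\mathfrak{m}_x^2+\mathfrak{m}_x\mathfrak{m}_u+\mathfrak{m}_u^d$, which is exactly the ideal $J$ in the paper's proof) it diverges from the paper in how the Betti numbers are extracted. The paper handles the two nonzero rows by two separate arguments: the linear strand by comparing Hilbert functions with the subideal generated by the quadrics of $J$ and invoking Lemma \ref{linearSyzygiesIdealOfQuadrics}, and the top strand by applying the Eliahou--Kervaire formula directly to the stable ideal $J$ (only the degree-$d$ generators, which involve just $u_2,\dots,u_m$, contribute), with $reg(R/I(Z_F))=d-1$ forcing the intermediate rows to vanish. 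You instead split $\bar I=J_1+J_2$, verify $J_1\cap J_2=\mathfrak{m}_x\mathfrak{m}_u^d$, and resolve via the short exact sequence $0\to \mathfrak{m}_u^d/\mathfrak{m}_x\mathfrak{m}_u^d\to\bar R/J_1\to\bar R/\bar I\to 0$, with the kernel resolved by Koszul-tensor-Eliahou--Kervaire and a mapping cone that is minimal because the two strands sit in degrees differing by $d-1\ge 2$. This buys a single uniform argument that produces both rows at once, makes the vanishing of rows $2,\dots,d-2$ automatic rather than a separate regularity argument, and explains the shape of the answer structurally (I checked that your Vandermonde collapse $\sum_{a+b=i}\binom{n+1}{a}\binom{k-1}{b}=\binom{n+k}{i}$ and the hockey-stick identity reproduce the paper's $\beta_{i,d-1+i}$ exactly, and that Lemma \ref{linearSyzygiesIdealOfQuadrics} with $(n,r)\mapsto(n+1,n+m)$ gives the stated $\beta_{i,i+1}$). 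The one point you rightly flag as the main obstacle --- that the three visible families of generators exhaust $I(Z_F)$, so that $\bar I$ is really the claimed monomial ideal --- is asserted with essentially the same level of detail in the paper's own proof, so you are not missing anything the paper supplies; a clean way to close it is to compare the Hilbert function of the subideal generated by the three families with $\deg Z_F=2(n+1)$, as the paper implicitly does.
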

\begin{proof}
    Note that $I(Z_F)$ is generated by:
    \begin{itemize}
        \item $x_ix_j$ for all $i,j \in \{0,1,\dots,n\}$;
        \item $x_0\ell_{0,2}$,\dots,$x_0\ell_{0,m}$, \dots, $x_n\ell_{n,2}$, \dots, $x_n\ell_{n,m}$;
        \item $g_1,\dots,g_{\binom{d+m-2}{m-2}} \in K[u_1,\dots,u_m]_{d}$. (Lemma \ref{intersectionOfIdeals})
    \end{itemize}
    So, since we already know the Hilbert function of $K[u_1,\dots,u_m]/\cap_{i=0}^n(\ell_{i,2},\dots,\ell_{i,m})$ (Lemma \ref{intersectionOfIdeals}), it is straightforward to check that the Hilbert function of $R/I(Z_F)$ is
    \begin{equation}\label{HFRI}
    HF_{R/I(Z_F)}=\left(1, m + n+1, \binom{m+1}{m-1} + n+1, \dots, 2n+2, 2n+2 \dots \right),
    \end{equation}
    where we used that $n+1=\binom{m+d-2}{m-1}$ and hence $2n+2=\binom{m+d-2}{m-1} +n+1$.
    Certainly, we are adding $n+1$ variables $x_0,\cdots ,x_n$ to the polynomial ring $K[u_1,\dots,u_m]$ and, by construction, for any integer $t\ge 1$ there are exactly $n+1$ forms of degree $t$ in $R/I(Z_F)$ which are not in $[K[u_1,\dots,u_m]/\cap_{i=0}^n(\ell_{i,2},\dots,\ell_{i,m})]_t$.
    
    Now, the $h$-vector of $Z_F$ can be computed by taking the first difference in (\ref{HFRI}), that is,
\begin{equation} \label{hvectorZ_F}
   \Delta HF_{R/I(Z_F)}= HF_{\hat{R}/J}=\left(1, n+m, \binom{m}{m-2},\binom{m+1}{m-2},\dots,\binom{d+m-3}{m-2}\right),
    \end{equation}
    where $\hat{R}:=R/(u_1)$ and $J$ is the ideal generated by the classes of:
    \begin{itemize}
        \item $x_ix_j$ for all $i,j \in \{0,1,\dots,n\}$;
        \item $x_0u_2$,\dots,$x_0u_m$,\dots, $x_nu_2$, \dots, $x_nu_m$;
        \item $u_2^{i_2}\cdot u_3^{i_3}\cdot ... \cdot u_m^{i_m}$ with $ i_2+\cdots+i_m=d$.
    \end{itemize}
    Let $J_2$ be the ideal generated by the quadrics in $J$ and consider $B=\hat{R}/J_2$. Then, $HF_{\hat{R}/J}(\nu)=HF_B(\nu)$ for $\nu\le d-1$ and the $\nu$-th rows, $0\le \nu \le d-2$, of the Betti tables of $\hat{R}/J$ and $B$ coincide. But all those rows are zero in the Betti table of $B$ except for the 1-st row (Lemma \ref{linearSyzygiesIdealOfQuadrics}). Thus
    $$
        \beta_{i,i+1}= i\binom{n+2}{i+1} + (n+1)\Biggl( \binom{n+m}{i} - \binom{n+1}{i} \Biggr).
    $$
    Moreover, since $reg(R/I(Z_F))=d-1$, the (d-1)-th row must be the last non-zero row in the Betti table of $R/I(Z_F)$ and its corresponding Betti numbers are completely determined, once renamed $x_0,\dots,x_n,u_2,\dots,u_m$ as $x_1,\dots,x_{n+1},x_{n+2},\dots,x_{n+m}$ respectively, via the general Eliahou-Kervaire formula for the Betti numbers (cf. \cite[Corollary 7.2.3]{HH})
    $$
        \beta_{i,i+d-1}(\hat{R}/J)=\sum_{k=1}^{n+m}\binom{k-1}{i-1}m_{k,d},
    $$
    where $m_{k,d}$ denotes the number of generators of degree $d$ in $J$ whose greatest subindex equals $k$. But those generators involve only the variables $x_{n+2},\dots,x_{n+m}$, so
    $$
    \beta_{i,i+d-1}(\hat{R}/J)=\sum_{k=n+2}^{n+m}\binom{k-1}{i-1}m_{k,d}=\sum_{j=0}^{m-2}\Biggl(\binom{j+n+1}{i-1}\sum_{\ell=0}^{d-1}\binom{\ell+j-1}{j-1}\Biggr),
    $$
    which proves what we want.
\end{proof}

\begin{theorem}\label{mainTheoremGeneralFullPerazzos} Let $A_F$ be any full Perazzo algebra with Macaulay dual generator $F\in K[X_0,\dots,X_n,U_1,\dots,U_m]_d$. Then the following holds:
\begin{itemize}
    \item [(1)] $A_F$ is a doubling of a 0-dimensional scheme $Z_F\subset \PP^{n+m}$, consisting of $n+1$ double points.
    \item[(2)]  The Betti table of  $A_F$ is:

\vskip 4mm
 \begin{center}
    \begin{tabular}{ r | c c c c c c}
           &  0 &  1 &  2 & $\cdots$ &  n+m & n+m+1 \\
    \hline
        0 &  1 &  · &  · &  · &  · & .\\  
        1 &  · & $\alpha_{2}$ & $\alpha_{3}$ & $\cdots$ & $\alpha_{n+m+1}$ & . \\
        · &  · &  · &  · &  · &  · & · \\
      d-1 & · & $\alpha_{n+m+1}$ & $\cdots$ & $\alpha_{3}$ & $\alpha_{2}$ & . \\
        d &  · &  · &  · &  · &  · & 1\\
    \end{tabular}
    \end{center}
    where
    \begin{align*}
        \alpha_i &= (i-1)\binom{n+2}{i} + (n+1)\Biggl(\binom{n+m}{i-1} - \binom{n+1}{i-1} \Biggr) \\
                 &+ \sum_{j=0}^{m-2} \Biggl( \binom{j+n+1}{n+m+1-i}\sum_{\ell=0}^{d-1}\binom{\ell+j-1}{j-1} \Biggr) \quad \text{for} \quad 2\le i \le n+m+1.
    \end{align*}
\end{itemize}
\end{theorem}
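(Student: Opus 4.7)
The strategy generalizes the approach of Theorems~\ref{doublingPerazzoFull} and~\ref{mainResult0} from the toy example. The plan is: (i) put $F$ into a normal form that exhibits $A_F$ as the doubling of the 0-dimensional scheme $Z_F\subset\PP^{n+m}$ of Proposition~\ref{bettiNumbersIZF}; (ii) construct an explicit doubling short exact sequence
\begin{equation*}
0 \longrightarrow \omega_{R/I(Z_F)}(-d) \xrightarrow{\psi} R/I(Z_F) \longrightarrow A_F \longrightarrow 0;
\end{equation*}
and (iii) compute the graded Betti numbers of $A_F$ by the mapping cone process with the resolution of $R/I(Z_F)$ furnished by Proposition~\ref{bettiNumbersIZF}, reading off the first row of the Betti table and deducing the $(d-1)$-th row via Gorenstein self-duality.

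For step~(i), the key observation is that since $\dim K[U_1,\ldots,U_m]_{d-1}=\binom{d+m-2}{m-1}=n+1$, for any generic collection of $n+1$ points $q_0,\ldots,q_n\in\PP^{m-1}$ the powers $L_0^{d-1},\ldots,L_n^{d-1}$ of their dual linear forms are a basis of $K[U_1,\ldots,U_m]_{d-1}$. A linear change of coordinates on $X_0,\ldots,X_n$ thus allows us to assume $F=\sum_{i=0}^n X_i L_i^{d-1}$, and we take $Z_F$ as in Proposition~\ref{bettiNumbersIZF} with the $q_i$'s supplying the linear forms $\ell_{i,k}$. The inclusion $I(Z_F)\subset\Ann_R(F)$ is then immediate on generators: the monomials $x_ix_j$ kill $F$ by multilinearity; each $x_i\ell_{i,k}$ kills $F$ because $x_i\circ F=L_i^{d-1}$ and $\ell_{i,k}(L_i)=0$, so apolarity for powers of linear forms (valid in characteristic zero) gives $\ell_{i,k}\circ L_i^{d-1}=0$; and the degree-$d$ generators supported in $K[u_1,\ldots,u_m]$ kill $F$ trivially for degree reasons. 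For step~(ii), combining Lemma~\ref{h2GeneralFullPerazzo} with the Hilbert function~(\ref{HFRI}) and analyzing the dualized and twisted resolution of $R/I(Z_F)$, one sees that $\omega_{R/I(Z_F)}(-d)$ has minimal generators only in degrees $2$ and $d$, with multiplicities $\beta_{n+m,\,n+m+d-1}^{R/I(Z_F)}$ and $\beta_{n+m,\,n+m+1}^{R/I(Z_F)}$, respectively. These match exactly the minimal-generator counts of $\Ann_R(F)/I(Z_F)$ in these degrees, so $\psi$ is defined by sending the generators of $\omega_{R/I(Z_F)}(-d)$ to suitable representatives in $\Ann_R(F)/I(Z_F)$; well-definedness and injectivity follow as in the proof of Theorem~\ref{doublingPerazzoFull}.

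For step~(iii), the mapping cone of a chain lift $\tilde\psi$ of $\psi$ produces a free resolution of $A_F$ whose $(i-1)$-th module is $F_{i-1}\oplus G_{i-2}$, where $F_\bullet$ and $G_\bullet$ resolve $R/I(Z_F)$ and $\omega_{R/I(Z_F)}(-d)$, respectively. The first-row Betti number $\alpha_i=\beta_{i-1,i}^{A_F}$ receives two contributions: from $F_{i-1}$, the first-row number $\beta_{i-1,i}^{R/I(Z_F)}=(i-1)\binom{n+2}{i}+(n+1)\bigl(\binom{n+m}{i-1}-\binom{n+1}{i-1}\bigr)$; and from $G_{i-2}$, obtained after dualizing and twisting, the top-row number $\beta_{n+m+2-i,\,n+m+d+1-i}^{R/I(Z_F)}=\sum_{j=0}^{m-2}\binom{j+n+1}{n+m+1-i}\sum_{\ell=0}^{d-1}\binom{\ell+j-1}{j-1}$; their sum is the claimed expression for $\alpha_i$. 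The $(d-1)$-th row is then the reverse of the first by Gorenstein self-duality, rows $0$ and $d$ pick up the trivial entries $1$, and the remaining rows vanish since only rows $0$, $1$, and $d-1$ of $R/I(Z_F)$'s Betti table are nonzero. The main obstacle is the minimality verification for the mapping cone: a twist-degree analysis shows that the generators of $F_{i-1}$ live in degrees $\{i,\,i+d-2\}$ while those of $G_{i-1}$ live in $\{i+1,\,i+d-1\}$, so for $d\geq 4$ these sets are disjoint and no cancellation in $\tilde\psi$ can occur; for $d=3$ the twists $i+1$ on both sides coincide, and one must rule out cancellation separately, either by explicit computation of $\tilde\psi$ or by matching the Hilbert series of $A_F$ (from Lemma~\ref{h2GeneralFullPerazzo}) against the alternating sum of the claimed Betti numbers, an equality ultimately forced by the rigid symmetric Hilbert function of the Gorenstein algebra $A_F$.
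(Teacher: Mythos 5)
Your proposal follows essentially the same route as the paper: the same normal form $F=\sum_i X_iL_i^{d-1}$, the same scheme $Z_F$ and doubling sequence built by matching the generator degrees of $\omega_{R/I(Z_F)}(-d)$ against those of $\Ann_R(F)/I(Z_F)$, and the same mapping-cone plus Gorenstein self-duality computation of the $\alpha_i$. The only substantive difference is that you explicitly verify minimality of the mapping cone (disjoint generator degrees for $d\ge 4$, separate check for $d=3$), a point the paper leaves implicit.
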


\begin{proof} 

(1)     First we observe that, up to a linear change of variables, the dual generator $F$ of a full Perazzo algebra can be written as follows:  $$F=X_0L_0^{d-1}+\cdots+X_nL_n^{d-1},$$ where $L_0^{d-1},\dots,L_n^{d-1}$ is a basis of $K[U_1,\dots,U_m]_{d-1}$,
    $$
    L_i= U_1 + \lambda_{i,2}U_2 \cdots + \lambda_{i,m} U_m
    $$
  and for any $k\in\{2,\dots,m\}$ and all $i,j\in\{0,\dots,n\}$, $\lambda_{i,k}\not=\lambda_{j,k}$.  In the linear subspace $\PP^{m-1}$ defined by $x_0=x_1=\cdots =x_n=0$, we consider the 0-dimensional subscheme $Z_F \subset \PP^{n+m}$ consisting of $n+1:=\binom{d+m-2}{m-1}$ double points, with homogeneous ideal 
    $$
    I(Z_F)=\bigcap_{i=0}^n\left(x_0,\dots,x_i^2,\dots,x_n,\ell_{i,2},\dots,\ell_{i,m}\right) \subset K[x_0,\dots,x_n,u_1,\dots,u_m]
    $$
    where $\ell_{i,k}:=\lambda_{i,k}u_1-u_k$. Remember that a minimal set of generators of $I(Z_F)$ is:
    \begin{itemize}
        \item $x_ix_j$ for all $i,j \in \{0,1,\dots,n\}$;
        \item $x_0\ell_{0,2},\dots,x_0\ell_{0,m}, \dots ,x_n\ell_{n,2},\dots,x_n\ell_{n,m}$;
        \item $g_1,\dots,g_{\binom{d+m-2}{m-2}} \in K[u_1,\dots,u_m]_{d}$.
    \end{itemize}
Note that all of them belong to $\Ann_R(F)$. Therefore, it holds $I(Z_F) \subset \Ann_R(F)$. Now, on the one hand, $\dim [R/I(Z_F)]_2=\dim  [A_F]_2+\binom{d+m-3}{m-2}$ (\ref{HFRI} and Lemma \ref{h2GeneralFullPerazzo}) and $\Ann_R(F)$ has $\binom{d+m-1}{m-1}-\binom{d+m-2}{m-2}=n+1$ more generators of degree $d$ than $I(Z_F)$ does. On the other hand, by Proposition \ref{bettiNumbersIZF}, we know a minimal free resolution of $R/I(Z_F)$:
$$0\rightarrow \begin{array}{c}
R(-n-m-1)^{\beta_{n+m,n+m+1}}\\
\oplus \\
R(-n-m-d+1)^{\beta_{n+m,n+m+d-1}}
\end{array}
\rightarrow \cdots  
\rightarrow \begin{array}{c}
R(-2)^{\beta_{1,2}} \\
\oplus \\
R(-d)^{\beta_{1,d}}
\end{array}
\rightarrow 
I(Z_F) \rightarrow 0
$$
where
$$
\beta_{i,j}=
\begin{cases}
    \beta_{i,d-1+i} =  \sum_{j=0}^{m-2} \Bigl( \binom{j+n+1}{i-1}\sum_{\ell=0}^{d-1}\binom{\ell+j-1}{j-1}\Bigr) \\\\
    \beta_{i,i+1}= i\binom{n+2}{i+1} + (n+1)\Bigl( \binom{n+m}{i} - \binom{n+1}{i} \Bigr),
\end{cases}
$$
as well as a minimal free resolution of the canonical module $\omega_{R/I(Z_F)}$:
$$
0\rightarrow  R(-n-m-1) \rightarrow 
 \begin{array}{c}
R(-2)^{\beta_{1,d} }\\
\oplus \\
R(-d)^{\beta_{1,2}}
\end{array}
\rightarrow \cdots \rightarrow
\begin{array}{c}
R(d-2)^{\beta_{n+m,n+m+d-1}} \\
\oplus \\
R^{\beta_{n+m,n+m+1}}
\end{array}
\rightarrow 
\omega _{R/I(Z_F)}\rightarrow 0,
$$
which means that $\omega_{R/I(Z_F)}(-d)$ has $\beta_{n+m,n+m+1} = n+1$ generators of degree $d$ and $\beta_{n+m,n+m+d-1}=\binom{d+m-3}{m-2}$ generators of degree $2$. In consequence, we can construct the exact sequence
\begin{equation}\label{generalDoublingSequence}
        0 \rightarrow \omega_{R/I(Z_F)}(-d) \xrightarrow{\psi} R/I(Z_F) \rightarrow R/\Ann_R(F) \rightarrow 0.
\end{equation}
Therefore, $A_F$ is a doubling of $Z_F$.

\vskip 2mm
 (2) We have just seen that  $A_F$ is a doubling of a 0-dimensional scheme $Z_F\subset \PP^{n+m}$  consisting of $n+1$ double points in $\PP^{m-1}$. Using the mapping cone process in (\ref{generalDoublingSequence}) one  computes a  minimal free resolution of $A_F$. Since $A_F$ is a Gorenstein ring, its minimal free resolution is self-dual up to twist. Therefore, we only need to compute the graded Betti numbers of the $1$-st non-zero row of its Betti table. For $2\le i \le n+m+1$, let $\beta_{i-1,i}$ and $\beta_{i-1,d+i-2}$ be the graded Betti numbers of the $1$-st and (d-1)-th rows of the Betti table of $R/I(Z_F)$, respectively. Then, $\alpha_i = \beta_{i-1,i} + \beta_{n+m+2-i,n+m+1+d-i}$, which proves what we want.
\end{proof}

\begin{example} 
Consider the Perazzo algebra $A_F$ whose Macaulay dual generator is the one studied by Stanley in \cite{s2}, that is,
$$
F=X_0U^3+X_1V^3+X_2W^3+X_3U^2V+X_4U^2W+X_5V^2U+X_6V^2W+X_7W^2U+X_8W^2V+X_9UVW.
$$
Firstly, and after a change of variables, $A_F$ is the doubling of the 0-dimensional subscheme $Z_F\subset \PP^{12}$ with homogeneous ideal 
$$
  I(Z_F) =\bigcap_{i=0}^9 \left(x_0,\dots,x_i^2,\dots,x_9,\lambda_{i,v}u-v,\lambda_{i,w}u-w\right)\subset K[x_0,\dots,x_9,u,v,w],
$$
where $Z_F$ is the union of $10$ double points, all of them with support on $x_0=x_1=\cdots=x_9=0$. By Proposition \ref{bettiNumbersIZF}, we know the graded Betti numbers of  $I(Z_F)$ and, by duality, we also know the graded Betti numbers of $\omega_{R/I(Z_F)}$. Therefore, using the exact sequence
$$
0\longrightarrow \omega_{R/I(Z_F)}\longrightarrow R/I(Z_F)\longrightarrow A_F\longrightarrow 0
$$ 
and applying the mapping cone process we get  a minimal free resolution of $A_F$ and the Betti table of $R/\Ann_R(F)$:

\vskip 2mm
    \begin{center}
    \begin{tabular}{ r | c c c c c c c c c c c c c c}
           &  0 &  1 &  2 & 3 & 4 & 5 & 6 & 7 & 8 & 9 & 10 & 11 & 12 & 13\\
    \hline
        0 &  1 & · &  · &  · &  · & · & · & · & · & · & · & · & · & · \\  
        1 &  · & 79 & 585 & 2220 & 5403 & 9150 & 11178 & 9975 & 6470 & 2979 & 925 & 174 & 15 & ·  \\
        2 &  · & · &  · &  · & · & . & · & · & · & · & · & · & · & · \\
        3 &  · & 15 & 174 & 925 & 2979 & 6470 & 9975 & 11178 & 9150 & 5403 & 2220 & 585 & 79 & · \\
        4 &  · & · & · & · & · & · & & · & · & · & · & · & · & 1
    \end{tabular}
    \end{center}
\end{example}

\begin{remark} 
Theorem \ref{mainTheoremGeneralFullPerazzos} can be generalized to many other Perazzo algebras not necessarily full with dual generator 
$$
F=X_0L_0^{\gamma-1}+\cdots +X_nL_n^{\gamma-1}\in K[X_0,\dots,X_d,U_1,\dots,U_m],
$$
where $L_i=U_1+\lambda_{i,2}U_2\cdots+\lambda_{i,m}U_m$, with $\lambda_{i,k}\not=\lambda_{j,k}$ for any $k\in\{2,\dots,m\}$, $i,j\in\{0,\dots,n\}$ and $\gamma > n+1$. For example, assume $m=2$ and consider a Perazzo algebra $A_F$ with dual generator $F=X_0L_0^4+X_1L_1^4+X_2L_2^4+X_3L_3^4 \in S_5$. Then, the Betti table of $R/\Ann_R(F)$:
    
    \begin{center}
    \begin{tabular}{ r | c c c c c c c}
           &  0 &  1 &  2 & 3 & 4 & 5 & 6 \\
    \hline
        0 &  1 & · &  · &  · &  · &  . & · \\  
        1 &  · & 14 & 36 & 39 & 20 & 4 & · \\
        2 &  · & 1 &  4 &  6 & 4 & 1 & · \\
        3 &  · & 1 &  4 &  6 & 4 & 1 & · \\
        4 &  · & 4 & 20 & 39 & 36 & 14 & · \\
        5 &  · & · & · & · & · & · & 1
    \end{tabular}
    \end{center}
\end{remark}

\section{Open problems}

In this short last section we state a couple of natural questions/problems stemming from our 
work.

\begin{problem}
   To determine the minimal free resolution or at least the graded Betti numbers of {\em any} Perazzo algebra.
\end{problem}

\begin{problem}
     Is {\em any} Perazzo algebra $A_F$ a doubling of a 0-dimensional scheme $Z_F\subset \PP^{n+m}$? 
     
     If not, can we characterise the homogeneous  forms $F\in K[X_0,\cdots ,X_n,U_1,\cdots ,U_m]$ of degree $d$ such that the Perazzo algebra $A_F$ is a doubling of a suitable 0-dimensional subscheme?
\end{problem}

\subsection*{Data availability statement} Data sharing not applicable to this article as no datasets were generated or
analysed during the current study.

\section*{Declaration}

\subsection*{Funding} The first author has been partially supported by the grant PID2020-113674GB-I00.
\subsection*{Conflict of interest} On behalf of all authors, the corresponding author states that there is no conflict of
interest.


\begin{thebibliography}{ll}

\bibitem{A} N. Abdallah, N. Altafi, P. De Poi, L. Fiorindo, A. Iarrobino, P. Macias Marques, E. Mezzetti, R.M. Mir\' o-Roig, L. Nicklasson, {\em Hilbert functions and Jordan type of Perazzo artinian algebras}, SLP-WLP 2022. Springer INdAM Series, vol 59. Springer, Singapore.


\bibitem{BE}D. Buchsbaum and D. Eisenbud, \emph{Algebra structures for finite free resolutions, and some structure theorems for ideals of codimension 3}, American Journal of Mathematics, \textbf{99}, (1977), 447-485.

\bibitem{BH} W. Bruns and J. Herzog, Cohen-Macaulay rings, Cambridge studies in advanced mathematics, vol. 39,
Cambridge, 1993.

\bibitem{CGIZ} A. Cerminara, R. Gondim, G. Ilardi, G. Zappalà,
\emph{On the Hilbert function of Gorenstein algebras of socle degree four},
Journal of Pure and Applied Algebra \textbf{224} (2020), 106434.


\bibitem{CRS} C. Ciliberto, F. Russo, A. Simis,
\emph{Homaloidal hypersurfaces and hypersurfaces with vanishing Hessian},
Advances in Mathematics  \textbf{218} (2008), 1759--1805.




\bibitem{FMMR}
L.~Fiorindo, E.~Mezzetti, and R.~M. Miró-Roig, \emph{Perazzo $3$-folds and the weak Lefschetz property}, Journal of Algebra, {\bf 626} (2023), 56-81.




\bibitem{GR} A. Garbagnati and F. Repetto, {\em  A geometrical approach to Gordan-Noether's and Franchetta's contributions
to a question posed by Hesse}, Collect. Math. {\bf 60} (2009), 27--41.

\bibitem{G} R. Gondim, {\em On higher Hessians and the Lefschetz properties}, J. Algebra {\bf 489} (2017), 241--263. 

\bibitem{G2} R. Gondim, F. Russo, G. Staglianò, {\em Hypersurfaces with vanishing hessian via Dual Cayley Trick}, Journal of Pure and Applied Algebra {\bf 224} (2020), 1215-1240.

\bibitem{GN} P. Gordan and M. Noether, {\em \"Uber die algebraischen Formen deren Hessesche Determinante identisch verschwindet}, Math. Ann. {\bf 10} (1876), 547-568.

\bibitem{M2}
D.~Grayson and M.~Stillman.
\newblock Macaulay2, a software system for research in algebraic geometry.
\newblock \url{http://www.math.uiuc.edu/Macaulay2/} (2020).

\bibitem{Gu} T. Gulliksen, O. Neg\.ard, {\em Un complexe résolvant pour certains idéaux déterminantiels}, C. R. Acad. Sci. Paris {\bf 274}, (1972), 16-18.

\bibitem{HH} J. Herzog and T. Hibi, {\em Monomial Ideals}, Springer-Verlag, 2011.

\bibitem{He1} O. Hesse,  {\em  \"Uber die Bedingung, unter welche eine homogene ganze Function von $n$ unabh\"angigen Variabeln durch line\"are Substitutionen von $n$ andern unabh\"angigen Variabeln auf eine homogene Function sich zur\"uckf\"uhren l\"ast, die eine Variable weniger enth\"alt}, J. reine angew. Math. {\bf 42} (1851), 117--124.

\bibitem{He2} O. Hesse, {\em Zur Theorie der ganzen homogenen Functionen}, J. reine angew. Math. {\bf 56} (1859), 263--269.

\bibitem{IK}
A.~Iarrobino and V.~Kanev, \emph{Power sums, {G}orenstein {A}lgebras, and  {D}eterminantal {L}oci, {A}ppendix {C} by {A}. {I}arrobino and {S}teven {L}.  {K}leiman, {T}he {G}otzmann {T}heorems and the {H}ilbert scheme}, Lecture  Notes in Mathematics \textbf{1721}, Springer-Verlag, Berlin (1999).

\bibitem{JMR}  Martina Juhnke-Kubitzke and Rosa M. Mir\'o-Roig, {\em List of problems}, SLP-WLP 2022. Springer INdAM Series, vol 59. Springer, Singapore.

\bibitem{KKRSSY} G. Kapustka, M. Kapustka, K. Ranestad, H. Schenck, M. Stillman, and B. Yuan. (2021). {\em Quaternary quartic forms and Gorenstein rings.} https://doi.org/10.48550/arXiv.2111.05817.


\bibitem{KMMNP} 
J.\ Kleppe, R.\  Mir\'o-Roig, J.\ Migliore, U.\ Nagel and  C.\ Peterson {\em Gorenstein liaison, complete intersection
liaison invariants and unobstructedness}, Mem.\ Amer.\ Math.\
  Soc.\ {\bf 154} (2001), no.\ 732.

\bibitem{L} C. Lossen, {\em When does the Hessian determinant vanish identically? (On Gordan and Noether's
Proof of Hesse's Claim)}, Bull. Braz. Math. Soc. {\bf 35} (2004), 71--82.

\bibitem{Mac}
F.~S. Macaulay, \emph{Some {P}roperties of {E}numeration in the {T}heory of  {M}odular {S}ystems}, Proc. London Math. Soc. \textbf{26} (1927), no.~2,  531--555.



\bibitem{MM} E.\ Mezzetti and R.M. \ Mir\'o-Roig, {\em Perazzo n-folds and the weak  Lefschetz property}, Rendiconti del Circolo Matematico di Palermo, {\bf 73} (2024) 2277-2295.

\bibitem{MR}
R.M. Mir\'o-Roig, {\em  Nagata idealizations and doublings}, Preprint 2024.

\bibitem{MP}
R.M. Mir\'o-Roig and J. Pérez, {\em Perazzo hypersurfaces and the Weak Lefschetz property}. J. Algebra {\bf 646} (2024), 357-375.



\bibitem{P} U. Perazzo, {\em Sulle variet\`a cubiche la cui hessiana svanisce identicamente}, G. Mat. Battaglini {\bf 38} (1900), 337--354.

\bibitem{R} 
F. Russo, {\em On the Geometry of Some Special Projective Varieties},  Lecture Notes of the Unione Matematica Italiana, {\bf 18}. Springer, Cham; Unione Matematica Italiana, Bologna (2016).

\bibitem{s2} R. Stanley, {\em Hilbert functions of graded algebras}, Advances in Math. {\bf 28} (1978), 57-83.


\bibitem{Wa} J. Watanabe, {\em On the Theory of Gordan-Noether on Homogeneous Forms with Zero Hessian}, Proc. Sch. Sci. Tokai Univ. {\bf 49} (2014), 1--21.

\bibitem{WB} J. Watanabe and M. de Bondt, {\em
On the theory of Gordan-Noether on homogeneous forms with
zero Hessian (improved version)} in: Polynomial rings and affine algebraic geometry, Springer Proc. Math. Stat. {\bf 319} (2020), Springer, Cham, 73--107.

\end{thebibliography}
\end{document}